\newtheorem{definition}{Definition }[section]
\newtheorem{lemma}[definition]
{Lemma}
\newtheorem{theorem}[definition]
{Theorem }
\newtheorem{ex}[definition]
{Example }
\newtheorem{corollary}[definition]
{Corollary}
\newtheorem{prop}[definition]{Proposition}
\newtheorem{rmk}[definition]{Remark}
\newcommand{\lgw}{\longrightarrow}
\newcommand{\s}{\sigma}
\newcommand{\ovl}{\overline}
\renewcommand{\deg}{\text{deg}\,}
\renewcommand{\O}{\mathcal{O}}
\newcommand{\m}{\mathfrak{m}}
\newcommand{\Z}{\mathbb{Z}}
\newcommand{\R}{\mathbb{R}}
\newcommand{\K}{\mathbb{K}}
\newcommand{\N}{\mathbb{N}}
\newcommand{\D}{\Delta}
\newcommand{\C}{\mathbb{C}}
\newcommand{\Q}{\mathbb{Q}}
\newcommand{\Supp}{\text{Supp}}
\renewcommand{\a}{\alpha}
\renewcommand{\b}{\beta}
\newcommand{\g}{\gamma}
\renewcommand{\phi}{\varphi}
\newcommand{\sr}[1]%
{\ifmmode{}^\dagger\else${}^\dagger$\fi\ifvmode
\vbox to 0pt{\vss
 \hbox to 0pt{\hskip\hsize\hskip1em
 \vbox{\hsize3cm\raggedright\pretolerance10000
 \noindent #1\hfill}\hss}\vss}\else
 \vadjust{\vbox to0pt{\vss%
 \hbox to 0pt{\hskip\hsize\hskip1em%
 \vbox{\hsize3cm\raggedright\pretolerance10000%
 \noindent #1\hfill}\hss}\vss}}\fi%
}
\DeclareMathOperator{\NP}{NP}
\begin{document}
\title{The Abhyankar-Jung Theorem}
\author{Adam Parusi\'nski, Guillaume Rond}

\address {}

\email{}

\address {D\'epartement de Math\'ematiques,
Universit\'e de Nice - Sophia Antipolis,
Parc Valrose,
06108 Nice Cedex 02,
France}
\email{adam.parusinski@unice.fr}
\address{Institut de Math\'ematiques de Luminy, 
Universit\'e de la M\'editerran\'ee, Campus de Luminy, Case 907,
13288 Marseille Cedex 9}
\email{guillaume.rond@univmed.fr}

\begin{abstract}
 We show that every quasi-ordinary Weierstrass polynomial 
 $P(Z) = Z^d+a_1 (X) Z^{d-1}+\cdots+a_d(X) \in \K[[X]][Z] $, $X=(X_1, \ldots , X_n)$, over an algebraically 
 closed field of characteristic zero $\K$, such that $a_1=0$, is $\nu$-quasi-ordinary.   That means that 
 if the discriminant    
 $\Delta_P \in \K[[X]]$ is equal to a monomial times a unit  then the ideal $(a_i^{d!/i}(X))_{i=2,\ldots ,d}$ 
 is monomial and generated by one of  $a_i^{d!/i}(X)$.  
 
 We use this result to give a constructive proof of the Abhyankar-Jung Theorem that works for any 
 Henselian local subring of $\K[[X]]$ and  the function germs of quasi-analytic families.  
\end{abstract}
\subjclass[2000]{Primary: 13F25. Secondary: 13J15, 26E10}
\maketitle

\section{Introduction}

Let $\K$ be an algebraically closed field of characteristic zero and let 
\begin{align}\label{polynomial}
P(Z)=Z^d+a_1(X_1, \ldots , X_n) Z^{d-1}+\cdots+a_d(X_1, \ldots , X_n) \in \K[[X]][Z]
\end{align}
be a unitary polynomial with coefficients formal power series in $X= (X_1, \ldots X_n)$.   Such a polynomial  $P$ is called \emph{quasi-ordinary} if its  discriminant $\Delta_P (X)$ 
equals $X_1^{\alpha_1} \cdots X_n^{\alpha_n} U(X)$, with  $\alpha_i \in \N$ and $U(0)\ne 0$.   We call $P(Z)$ \emph{a Weierstrass polynomial}  if $a_i(0)=0$ for all $i=1, \ldots,d$.   

We show the following result.

\begin{theorem} \label{maintheorem}
Let $\K$ be an algebraically closed field of characteristic zero and let 
$P \in \K[[X]][Z]$ be a quasi-ordinary Weierstrass polynomial such that  $a_1=0$.   
Then  the ideal $(a_i^{d!/i}(X))_{i=2,\ldots ,d}$  is monomial and generated by one of  $a_i^{d!/i}(X)$.  
\end{theorem}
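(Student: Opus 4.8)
The plan is to reformulate the conclusion as a statement about the weighted orders of the coefficients and then to reduce it to the classical one-variable Newton--Puiseux theory by restricting $P$ to monomial curves. For a weight $c=(c_1,\dots,c_n)\in\N^n$ and $f=\sum_\beta f_\beta X^\beta\in\K[[X]]$, write $\nu_c(f)=\min\{\langle c,\beta\rangle : f_\beta\ne 0\}$ for the $c$-weighted order, so that $c\mapsto\nu_c(f)$ is the piecewise linear concave support function of $\NP(f)$. The first observation is that the conclusion is equivalent to the following: there is an index $i_0\in\{2,\dots,d\}$ such that (i) $a_{i_0}$ is a monomial times a unit, and (ii) $\nu_c(a_i)/i\ge\nu_c(a_{i_0})/i_0$ for every $i$ and every $c$. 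Indeed, (i) says that $\nu_c(a_{i_0})$ is linear in $c$, whence $a_{i_0}^{d!/i_0}$ is a monomial times a unit; and (ii), after multiplying through by $d!$, says exactly that $\nu_c(a_i^{d!/i})\ge\nu_c(a_{i_0}^{d!/i_0})$, i.e.\ that this monomial divides every $a_i^{d!/i}$. Thus $(a_i^{d!/i})_i$ is the principal monomial ideal generated by $a_{i_0}^{d!/i_0}$.

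To produce $i_0$ I would restrict to monomial curves. Fix $c\in\N^n$ with all $c_j>0$ and substitute $X_j=t^{c_j}$; since $\Delta_P=X^\alpha U$ with $U(0)\ne0$, the one-variable polynomial $P_c(Z)=P(t^{c},Z)\in\K[[t]][Z]$ has discriminant $t^{\langle c,\alpha\rangle}U(t^{c})$, a monomial times a unit, hence is separable. By the classical Puiseux theorem its roots are fractional power series in $t$, and for generic $c$ the minimizing exponent of $a_i$ is unique, so $\nu_c(a_i)=\ord_t a_i(t^{c})$ with no cancellation. Plotting the points $(i,\ord_t a_i(t^{c}))$ and taking the first edge of the lower Newton polygon, the least slope from the origin equals the least valuation $\rho_c$ of a root of $P_c$, it is attained at the abscissa $i_0(c)$ equal to the number of roots of minimal valuation, and $\nu_c(a_i)/i\ge\rho_c=\nu_c(a_{i_0(c)})/i_0(c)$ for all $i$. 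Because $a_1=0$ forces the roots to sum to zero, the minimal valuation cannot be attained by a single root, so $i_0(c)\ge 2$; thus (ii) holds along each curve, with the \emph{per-curve} index $i_0(c)$.

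It remains to globalize, and this is where the bulk of the work --- and the main obstacle --- lies: one must show that $i_0(c)$ may be chosen independent of $c$ and that $c\mapsto\rho_c$ is \emph{linear}, so that $\nu_c(a_{i_0})=i_0\rho_c$ is linear and $a_{i_0}$ is a monomial times a unit. Equivalently, one must prove that the roots of $P$ acquire well-defined leading exponents that are \emph{totally ordered} for divisibility, with a unique coordinatewise-smallest one of multiplicity $i_0$. This rigidity is precisely what the hypothesis $\Delta_P=X^\alpha U$ should supply: writing $\ord_t\Delta_{P_c}=\langle c,\alpha\rangle$ as twice the sum over pairs of roots of the contact orders $\ord_t(\zeta_j-\zeta_k)$, the fact that this sum is an \emph{exactly linear} function of $c$ on the whole positive orthant forces the individual contact orders to be linear and pairwise comparable. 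I would establish this by induction on the number of variables $n$, using the one-variable case as the base and restriction to the coordinate hyperplanes together with the linearity of $c\mapsto\langle c,\alpha\rangle$ to control the $\nu_c(a_i)$; once $\rho_c$ is known to be linear, piecewise linearity of all the $\nu_c(a_i)$ lets one pass from the generic-$c$ identities above to all $c\in\N^n$ by continuity, and (i)--(ii) follow. The delicate point throughout is to extract the total ordering of the root exponents from the monomiality of the discriminant \emph{without} invoking the Abhyankar--Jung theorem, since the present result is meant to precede and to prove it.
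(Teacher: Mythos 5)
Your reduction of the statement to weighted orders is correct, and the per-curve analysis (restricting to $X_j=t^{c_j}$, reading off $\rho_c$ and $i_0(c)$ from the one-variable Newton polygon, and using $a_1=0$ to force $i_0(c)\ge 2$) is sound. But the proof has a genuine gap exactly where you locate ``the bulk of the work'': nothing in the proposal actually shows that $i_0(c)$ is independent of $c$ or that $c\mapsto\rho_c$ is linear, and these two facts \emph{are} the theorem. The mechanism you invoke --- ``the sum of the contact orders $\ord_t(\zeta_j-\zeta_k)$ is linear in $c$, hence each summand is linear and the exponents are pairwise comparable'' --- presupposes that the individual contact orders are well-defined concave piecewise-linear functions of $c$. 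That in turn requires a coherent labelling of the roots of $P_c$ across all curves, i.e.\ roots $Z_j(X)$ in some ring of fractional power series with $\zeta_j(t)=Z_j(t^c)$; this is precisely the Abhyankar--Jung theorem, which you rightly say you may not use. Without such a labelling only the \emph{multiset} of contact orders is defined for each $c$ separately, and the convexity argument (sum of concave functions linear $\Rightarrow$ each affine) does not apply. The proposed ``induction on $n$ restricting to coordinate hyperplanes'' is not an argument: restriction to $X_n=0$ loses exactly the mixed faces of the Newton polyhedron whose absence one must prove, and no inductive step is supplied.

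For comparison, the paper closes this gap by an entirely different mechanism. Assuming the Newton polyhedron has a compact face $H$ of dimension $\ge 2$ through $R_0=(0,\dots,0,d)$, it considers the face polynomial $P_H$ and proves a Liouville-type rigidity statement (Proposition \ref{prop2}): a polynomial with $a_1=0$ whose discriminant is a nonzero constant times a monomial must have all coefficients monomials. Since $\NP(P_H)$ is not a segment, this forces $\Delta_{\tilde P_H}$ to vanish at some point $c$ of the torus $(\K^*)^n$; the substitution $X\mapsto(c+\tilde X)T^{\beta}$, $Z\mapsto ZT^{\beta_{n+1}}$ along the weight $\beta$ defining $H$, followed by a finite-order truncation and the restriction result (Proposition \ref{restriction}), then contradicts $\Delta_P=X^{\alpha}U$. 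This sidesteps any need to follow roots as $c$ varies. (The paper's second proof does follow your ``totally ordered contact exponents'' idea, but it obtains the ordering from the complex-analytic Abhyankar--Jung theorem plus Artin approximation rather than deriving it from the linearity of the discriminant's order.) To repair your argument you would need to supply either the rigidity statement of Proposition \ref{prop2} or an independent proof that the root differences admit coherent leading exponents.
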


The latter condition is equivalent to  $P$ being $\nu$-quasi-ordinary in the sense of Hironaka, 
\cite{H}, \cite{L}, and satisfying $a_1=0$.  Being $\nu$-quasi-ordinary is a condition on 
the Newton polyhedron of $P$ that we recall in Section \ref{maintheoremproof} below.  
Thus Theorem \ref{maintheorem} can be rephrased as follows. 

\begin{theorem}[\cite{L} Theorem 1] \label{Luengotheorem}
If $P$ is a quasi-ordinary Weierstrass polynomial with $a_1=0$ then $P$ is  $\nu$-quasi-ordinary.
\end{theorem}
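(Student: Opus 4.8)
The plan is to prove that the ideal $(a_i^{d!/i})_{i=2,\ldots,d}$ is monomial and principal by analyzing the roots of $P$ in a suitable Puiseux-type extension. Since $P$ is quasi-ordinary, after a ramification $X_i = T_i^{N}$ (for a common denominator $N$) the polynomial $P$ splits completely, and its roots $\zeta_1(T),\ldots,\zeta_d(T)$ lie in $\K[[T]]$. The crucial structural fact supplied by quasi-ordinarity is that the differences $\zeta_j - \zeta_k$ of distinct roots are, up to units, monomials in $T$; this is exactly what the condition $\Delta_P = \text{monomial}\times\text{unit}$ encodes, because $\Delta_P = \prod_{j<k}(\zeta_j-\zeta_k)^2$.

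\smallskip

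First I would express each coefficient $a_i$ through the roots: $a_i = (-1)^i e_i(\zeta_1,\ldots,\zeta_d)$, the $i$-th elementary symmetric function. The hypothesis $a_1 = 0$ means $\sum_j \zeta_j = 0$, so the roots are \emph{centered}. I would then set $\mu_j := \zeta_j$ viewed in $\K[[T]]$ and observe that, because the pairwise differences are monomials times units, the whole family of roots has a well-ordered monomial structure: there is a distinguished root (or more precisely a distinguished monomial) governing the leading behaviour. The key quantitative step is to read off the $T$-adic monomial order $\ord(\zeta_j - \zeta_k)$ and to show that after centering there is a single dominant monomial $M$ such that $\ord(a_i) = i\cdot \ord(M) + (\text{lower or equal})$ with equality controlled uniformly. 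Passing to $a_i^{d!/i}$ clears denominators so that all exponents become genuine monomials back in the $X$ variables, and the claim is that one particular index $i_0$ minimizes the resulting monomial, making $a_{i_0}^{d!/i_0}$ divide every other $a_i^{d!/i}$.

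\smallskip

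Concretely, I would fix the root of largest (or a chosen extremal) valuation and use the centering $\sum \zeta_j = 0$ to force the elementary symmetric functions to inherit a common monomial factor. Writing $\zeta_j = u_j M_j$ with $M_j$ monomials and $u_j$ units, the fact that all $\zeta_j - \zeta_k$ are monomial $\times$ unit forces the $M_j$ to be \emph{totally ordered} by divisibility in the monoid of monomials, since two incomparable monomials cannot have a monomial difference. This total ordering is the heart of the matter: it lets me pick the largest monomial $M_{\max}$ and show that $e_i(\zeta)$ is divisible by the appropriate power of $M_{\max}$ with the divisibility being tight for exactly one $i$. Taking $d!/i$ powers homogenizes these into comparable monomials in $\K[[X]]$, and the minimal one generates the ideal.

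\smallskip

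I expect the main obstacle to be establishing the total order on the monomials $M_j$ and proving that the extremal elementary symmetric function's power is genuinely the generator rather than merely a candidate. The subtlety is that quasi-ordinarity gives monomial differences $\zeta_j-\zeta_k$ only up to units, and in several variables the exponents are vectors in $\N^n$, so ``monomial'' divisibility is only a partial order a priori; one must use the full strength of the pairwise-comparability hypothesis to upgrade it to a total order on the relevant root-monomials. Handling the possibility of repeated or equal leading monomials (where the units' leading coefficients could cancel in $e_i$) will require care, and this is where I anticipate the computation of $\ord(a_i)$ becoming delicate. I would resolve it by an inductive argument on $d$, peeling off the dominant root-monomial and reducing to a centered quasi-ordinary polynomial of smaller degree.
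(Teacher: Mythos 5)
Your reduction of the theorem to a statement about roots is sound as far as it goes, and the second half of your sketch (monomial differences of roots, the total order on their exponents coming from pairwise comparability, the centering $\sum_j\zeta_j=0$ forcing each root to be divisible by the common minimal monomial, and the identification of an extremal index $i_0$) is essentially Proposition \ref{prop} together with Lemmas \ref{lemBM} and \ref{lemma1} of the paper. But your opening step is a genuine gap, and in fact a circularity: you assert that ``since $P$ is quasi-ordinary, after a ramification $X_i=T_i^N$ the polynomial splits completely and its roots lie in $\K[[T]]$.'' For $n\ge 2$ variables over $\K[[X]]$ this is precisely the Abhyankar--Jung Theorem (Theorem \ref{AJ}), which is not a formal consequence of $\Delta_P$ being a monomial times a unit --- it is the main theorem that the paper deduces \emph{from} the statement you are proving. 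There is no several-variable Newton--Puiseux algorithm that hands you these roots for free; the whole point of Theorem \ref{Luengotheorem} is to make such an algorithm work (Section \ref{applications}).

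To close the gap you need an independent source of roots, and this is where the real work lies. The paper's second proof does exactly this: it first establishes the splitting for \emph{convergent} complex coefficients by topology (over the torus complement $U^*$ the zero set of $P$ is a finite covering, trivialized by a power substitution, and the bounded branches extend across the coordinate hyperplanes by the Riemann removable-singularity theorem --- Proposition \ref{complex}); it then applies an argument of your kind (Proposition \ref{prop}) to get $\nu$-quasi-ordinarity in $\C\{X\}[Z]$; and finally it transfers the conclusion to arbitrary $\K[[X]]$ via the Lefschetz principle and the Artin Approximation Theorem, approximating the formal coefficients by convergent ones with discriminant of the same monomial shape and comparing Newton polyhedra. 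None of these three steps appears in your proposal. (The paper's first proof avoids roots altogether, working directly with the Newton polyhedron and a degeneration of the discriminant along a compact face of dimension $\ge 2$.) As a secondary point, the difficulty you anticipate with ``units' leading coefficients cancelling in $e_i$'' dissolves if you argue as in the paper: after writing $Z_i=X^{\beta}\tilde Z_i$, the index $i_0$ is simply the number of $i$ with $\tilde Z_i(0)\ne 0$, and $e_{i_0}(\tilde Z(0))$ reduces to the product of the nonvanishing values, hence is nonzero; no induction on $d$ is needed.
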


As noticed  in \cite{K-V}, Luengo's proof of Theorem \ref{Luengotheorem} is not complete.  We complete the proof of Luengo and thus we complete his proof of the Abhyankar-Jung Theorem.   

\begin{theorem}[Abhyankar-Jung Theorem]\label{AJ}
Let $\K$ be an algebraically closed field of characteristic zero and let 
$P \in \K[[X]][Z]$ be a quasi-ordinary Weierstrass polynomial 
such that the  discriminant of $P$ satisfies 
$\Delta_P (X)= X_1^{\alpha_1} \cdots X_r^{\alpha_r} U(X)$, where 
$U(0)\ne 0$, and $r\le n$.  
Then there is $q\in\N\setminus \{0\}$ such that $P(Z)$ has its roots in 
$\K[[X_1^{\frac{1}{q}},...,X_r^{\frac{1}{q}}, X_{r+1},...,X_n]]$.
\end{theorem}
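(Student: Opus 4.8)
The plan is to deduce Theorem \ref{AJ} from Theorem \ref{maintheorem} by induction on the degree $d$, performing at each stage a single ramification that strips off the common monomial factor of the roots and then splitting the polynomial by Hensel's lemma.

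First I would normalize. Since $\cha \K = 0$, the Tschirnhausen substitution $Z \mapsto Z - a_1(X)/d$ replaces $P$ by a polynomial with vanishing subleading coefficient; it leaves the discriminant unchanged (all roots are translated equally), preserves the Weierstrass property (as $a_1(0)=0$ gives $a_1/d \in \m$), and alters the roots only by the element $a_1/d \in \K[[X]]$. Thus I may assume $a_1 = 0$ and apply Theorem \ref{maintheorem}: there is an index $k \in \{2,\dots,d\}$ with $(a_i^{d!/i})_i = (a_k^{d!/k})$. In particular $a_k^{d!/k}$, and hence $a_k$ itself since $\K[[X]]$ is a UFD, is a monomial times a unit, say $a_k = X^{\gamma} U_k$ with $U_k(0) \neq 0$ and $\gamma \in \N^n$; and the divisibility $a_k^{d!/k} \mid a_i^{d!/i}$ yields $\ord(a_i) \ge (i/k)\gamma$ componentwise for every $i$.

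Next I would confine $\gamma$ to the first $r$ variables. The discriminant of $Z^d + a_2 Z^{d-2}+\cdots+a_d$ is a polynomial in $a_2,\dots,a_d$ vanishing at the origin, so $\Delta_P \in (a_2,\dots,a_d)$ and therefore $V(a_2,\dots,a_d) \subseteq V(\Delta_P) = V(X_1\cdots X_r)$. On the other hand $a_k^{d!/k}$ divides each $a_i^{d!/i}$, so $V(a_2,\dots,a_d)=V(a_k)=V(X^{\gamma}) = \bigcup_{\gamma_j>0}\{X_j=0\}$. Comparing, $\{\,j : \gamma_j > 0\,\} \subseteq \{1,\dots,r\}$, that is, $\gamma_j = 0$ for $j>r$. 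This is the step I expect to matter most: it is precisely what guarantees the ramification below touches only $X_1,\dots,X_r$, as the statement requires. I would then ramify and split. Substituting $X_j \mapsto X_j^{k}$ for $j\le r$ (leaving $X_{r+1},\dots,X_n$ fixed) and then $Z = X^{\gamma} W$, the estimate $\ord(a_i)\ge (i/k)\gamma$ together with $\gamma_j=0$ for $j>r$ shows that $Q(W) := X^{-d\gamma} P\bigl(X_1^k,\dots,X_r^k,X_{r+1},\dots,X_n; X^{\gamma} W\bigr)$ is a genuine monic polynomial in $\K[[X]][W]$ of degree $d$, with vanishing $W^{d-1}$-coefficient and with $W^{d-k}$-coefficient the unit $U_k(X_1^k,\dots)$. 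Hence $Q \bmod \m$ is a degree-$d$ polynomial in $\K[W]$ that is not of the form $(W-c)^d$ (its subleading coefficient is $0$, which forces $c=0$, contradicting its nonzero degree-$(d-k)$ coefficient). As $\K$ is algebraically closed, $Q \bmod \m$ admits a nontrivial coprime factorization, which Hensel's lemma lifts to $Q = Q_1 Q_2$ with $1\le \deg Q_i < d$. Since $\Delta_Q = X^{-d(d-1)\gamma}\Delta_P(X_1^k,\dots)$ is again a monomial in $X_1,\dots,X_r$ times a unit, and $\Delta_Q = \Delta_{Q_1}\Delta_{Q_2}\operatorname{Res}(Q_1,Q_2)^2$ with $\operatorname{Res}(Q_1,Q_2)$ a unit (the factors being coprime modulo $\m$), each $Q_i$ is quasi-ordinary with discriminant supported in $X_1,\dots,X_r$.

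Finally I would run the induction. Each $Q_i$ reduces to $(W-c_i)^{d_i}$ modulo $\m$ for some $c_i \in \K$; the translation $W \mapsto W + c_i$ turns it into a quasi-ordinary Weierstrass polynomial of degree $d_i < d$ with discriminant supported in $X_1,\dots,X_r$, to which the inductive hypothesis applies and yields its roots in $\K[[X_1^{1/q_i},\dots,X_r^{1/q_i},X_{r+1},\dots,X_n]]$. Unwinding $W \mapsto W+c_i$, $Z = X^{\gamma}W$ and $X_j \mapsto X_j^{k}$ (the last multiplying the ramification index by $k$), and recalling the harmless shift by $a_1/d$, exhibits every root of the original $P$ in $\K[[X_1^{1/q},\dots,X_r^{1/q},X_{r+1},\dots,X_n]]$ with $q = k\cdot\operatorname{lcm}_i(q_i)$; the base case $d=1$ is immediate, the unique root $-a_1$ already lying in $\K[[X]]$. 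The point demanding care throughout is that every one of these operations preserves the class of quasi-ordinary Weierstrass polynomials whose discriminant is supported in the first $r$ variables, so that Theorem \ref{maintheorem} and the confinement argument remain applicable at each level of the recursion.
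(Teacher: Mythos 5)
Your proposal is correct and follows essentially the same route as the paper's own proof in Section \ref{applications}: Tschirnhausen, then Theorem \ref{maintheorem} together with the observation that the exponent $\gamma$ is supported in $X_1,\dots,X_r$ (the paper's Corollary \ref{gamma}), the substitution $Z=X^{\gamma}W$ after a ramification, a Hensel/implicit-function factorisation, and recursion using the multiplicativity of the discriminant. The only differences are cosmetic, e.g.\ your vanishing-locus argument for $\gamma_j=0$ when $j>r$ versus the paper's divisibility remark, and Hensel's lemma in place of the explicit Jacobian-equals-resultant computation.
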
 

Theorem \ref{AJ} has first been proven by Jung in 1908 for $n=2$ and $\K=\C$  in order to give a local uniformisation   of singular complex analytic surfaces \cite{J}.  His method has been then used by Walker \cite{W} and Zariski \cite{Z} to give proofs of resolution of singularities of surfaces, see \cite{PP} for a detailed account of the Jung's method of resolution of singularities of complex surfaces.  The first complete proof of Theorem \ref{AJ} 
appeared in \cite{Ab}.  As shown in \cite{L}, Theorem \ref{AJ} follows fairly easily from his Theorem 1 (our Theorem \ref{Luengotheorem}).  Since then there were other proofs of Theorem \ref{AJ} based on 
Theorem 1 of \cite{L}, see e.g. \cite{Zu}.  

Theorem \ref{maintheorem} is proven in Section \ref{maintheoremproof}.  
In Section \ref{applications} we show how Theorem \ref{maintheorem} gives a procedure to compute the 
 roots of $P$, similar to the Newton algorithm for $n=1$ (as done in \cite{B-M2}), and thus implies the Abhyankar-Jung Theorem. 
 Unlike the one in \cite{L}, our procedure does not use the Weierstrass Preparation Theorem, but only the Implicit Function Theorem.   Thanks to this we are able to  extend the Abhyankar-Jung Theorem to 
 Henselian subrings of $\K[[X]]$, 
 and  quasi-analytic families of function germs 
answering  thus a question posed in \cite{R}.  A similar proof of this latter result was given in \cite{N1} assuming Theorem 1 of \cite{L}. In \cite{N2} is also given a proof of the Abhyankar-Jung Theorem for excellent Henselian subrings of $\K[[X]]$ using model theoretic methods and Artin Approximation.   

It is not difficult to see that Theorem  \ref{maintheorem} and the Abhyankar-Jung Theorem are equivalent, one implies easily the other.  As we mentioned above Theorem  \ref{maintheorem} 
gives the Abhyankar-Jung Theorem.  We show in Section \ref{alternativeproof} how Theorem \ref{maintheorem} can be proven using the Abhyankar-Jung Theorem, see also \cite{Z}.  
We also give  in Section \ref{alternativeproof} an alternative proof of Theorem  \ref{maintheorem} that uses
 the complex analytic version of the Abhyankar-Jung Theorem and 
the Artin Approximation Theorem.  

Finally, in Section \ref{toric}, we extend Abhyankar-Jung Theorem to the toric case following our alternative proof of Theorem \ref{alternativeproof} and using a complex analytic version of the Abhyankar-Jung Theorem in the toric case proven by P. Gonz\'alez P\'erez \cite{G-P1}.

\begin{rmk}
Neither in  Theorem  \ref{maintheorem} nor in the Abhyankar-Jung Theorem the assumption that 
$P$ is Weierstrass is necessary.  Moreover, in Theorem  \ref{maintheorem} the assumption 
that $\K$ is algebraically closed is not necessary.  If  $\K$ is not algebraically closed 
then the roots of $P$ may have coefficients in a finite  extension of $\K$, see Proposition \ref{lastprop} below.  
\end{rmk}

\noindent
 \textbf{Notation. } 
The set of natural numbers including zero is denoted by $\N$.  We denote  $\Q_{\ge 0}= \{x\in \Q; x\ge 0\}$ and 
 $\Q_+= \{x\in \Q; x> 0\}$.  Similarly, by 
 $\R_{\ge 0}$ we denote 
the set $\{x\in \R; x\ge 0\}$.  \\

\noindent
\textbf{Acknowledgment. }
The authors would like to thank P. Gonz\'alez P\'erez  for his valuable suggestions, in particular concerning the toric case.


\section{Preliminary results}

The following proposition is well known, see for instance \cite{S} or \cite{N2}.  We present its proof for the reader's convenience.

\begin{prop}\label{complex}
The Abhyankar-Jung Theorem holds
 for quasi-ordinary polynomials with  complex analytic coefficients, $P\in \C\{X\}[Z]$.  
 \end{prop}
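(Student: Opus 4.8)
The plan is to construct the roots of $P$ directly as convergent fractional power series by exploiting the complex topology of the complement of the discriminant locus. First I would fix a polydisc $D=D_1\times\cdots\times D_n$ centered at the origin, small enough that all coefficients $a_i$ converge on $D$ and that on $D$ one has $\Delta_P=X_1^{\a_1}\cdots X_r^{\a_r}U$ with $U$ nowhere vanishing. Set $D^*=D_1^*\times\cdots\times D_r^*\times D_{r+1}\times\cdots\times D_n$, where $D_i^*=D_i\setminus\{0\}$ for $i\le r$. On $D^*$ the discriminant $\Delta_P$ is nonzero, so $P$ and $\partial P/\partial Z$ share no root; hence $P$ has $d$ distinct simple roots at every point of $D^*$, which by the implicit function theorem are locally holomorphic functions on $D^*$.

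Next I would analyze the monodromy of these multivalued roots. The fundamental group $\pi_1(D^*)$ is the free abelian group $\Z^r$, generated by loops $\g_i$ turning once around the hyperplane $\{X_i=0\}$. Analytic continuation of the $d$ roots along these loops defines a monodromy representation $\rho\colon\Z^r\to S_d$ into the symmetric group permuting the roots; since $\Z^r$ is abelian, the image is an abelian subgroup of $S_d$. Setting $q=d!$ and performing the base change $X_i=T_i^q$ for $i\le r$ (and $X_i=T_i$ for $i>r$), each generating loop is replaced by its $q$-th power; because the order of every element of $S_d$ divides $d!$, we get $\rho(\g_i)^q=\mathrm{id}$ for all $i$. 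Thus, after pulling back along the branched cover $T\mapsto(T_1^q,\ldots,T_r^q,T_{r+1},\ldots,T_n)$, each root becomes a single-valued holomorphic function on the punctured polydisc in the variables $T$.

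It then remains to extend these single-valued holomorphic functions across the coordinate hyperplanes $\{T_i=0\}$. Because $P$ is a Weierstrass polynomial, every root $\zeta$ satisfies $|\zeta|^d\le\sum_{i}|a_i|\,|\zeta|^{d-i}$ and so tends to $0$ as $X\to0$; in particular the roots are bounded near the origin. By the Riemann extension theorem in several variables, a bounded holomorphic function on the complement of an analytic hypersurface extends holomorphically across it, so each root extends to an element of $\C\{T_1,\ldots,T_r,X_{r+1},\ldots,X_n\}=\C\{X_1^{1/q},\ldots,X_r^{1/q},X_{r+1},\ldots,X_n\}$, yielding the desired roots of $P$ with convergent fractional-power-series coefficients.

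The conceptually routine parts are the local analyticity of the roots and the computation $\pi_1(D^*)=\Z^r$; the step I expect to require the most care is the passage from multivaluedness to single-valuedness combined with the extension across $\{T_i=0\}$. Concretely, one must check that killing the monodromy of each generator simultaneously suffices — which holds because the image of $\rho$ is abelian and $d!$ annihilates every element of $S_d$ — and one must justify the removable-singularity argument, which rests on the boundedness furnished by the Weierstrass hypothesis and on the several-variable Riemann extension theorem applied along the normal-crossings divisor $\{T_1\cdots T_r=0\}$.
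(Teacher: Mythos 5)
Your argument is correct and follows essentially the same route as the paper: both trivialize the degree-$d$ covering of the punctured polydisc $D^*$ (using $\pi_1(D^*)=\Z^r$) by a substitution of powers and then extend the resulting bounded single-valued roots across the coordinate hyperplanes by the Riemann removable singularity theorem. Your phrasing via the monodromy representation $\rho\colon\Z^r\to S_d$ with the explicit choice $q=d!$ is just a more concrete version of the paper's covering-space argument.
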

 
\begin{proof}
Fix a polydisc 
$U= \prod_{i=1}^n D_\varepsilon= \{X\in \C^n ; |X_i|<\varepsilon, i=1, \ldots , n \}$ 
such that the coefficients 
$a_i(X)$ of $P$ are analytic on a neighbourhood of $\overline U$.   
   By assumption, 
the projection of  $\{(X,Z) \in U \times \C; P(X,Z) = 0\}$ 
  onto $U$ is a finite branched covering.  
Its restriction over  $U^* = \{X \in U ; X_i\ne 0, i=1, \ldots , r \}$ 
is a finite covering of degree $d$.  Thus there is a substitution of powers  
$$ X(Y) = (Y_1^q, \ldots ,Y_r^q, Y_{r+1}, \ldots , Y_n): U_1 \to U,$$
where  $U_1= \prod_{i=1}^r D_{\varepsilon^{1/q}}\times \prod_{i=r+1}^n D_\varepsilon$,  such that the induced covering over $U_1^*= \prod_{i=1}^r D^*_{\varepsilon^{1/q}}\times \prod_{i=r+1}^n D_\varepsilon$ is trivial.  That is to say on $U_1^*$,   $P(X(Y), Z)$ factors  
$$
P(X(Y), Z) =  \prod (Z - f_i(Y)),
$$
with $f_i $ complex analytic and bounded on $U_1^*$.  \footnote{Fix $u_0\in U^*$.  The fundamental group $\pi_1(U^*, u_0) $ is equal to $\Z^r$.  To each connected finite covering $h:\tilde U^* \to U^*$ and each $\tilde u_0 \in h^{-1} (u_0)$ corresponds 
 a subgroup $h_*(\pi_1(\tilde U^*,\tilde u_0)) \subset \pi_1(U^*,u_0)$ of finite index.  If 
$(q\Z)^r \subset h_*(\pi_1(\tilde U^*,\tilde u_0))$, then the  covering corresponding to $(q\Z)^r \subset \Z^r$, that is the substitution of powers $X(Y): U_1\to U$,
 factors through $h$.  That is there exists an analytic map $\tilde U^* \to \{(X,Z) \in U^* \times \C; P(X,Z) = 0\}$, 
 of the form $Y\to (X(Y), Z(Y))$.  This $Z(Y)$ is one of the functions $f_i$.  
 If we apply this argument to each connected component $\tilde U^*$ of 
 $ \{(X,Z) \in U^* \times \C; P(X,Z) = 0\}$ and to each point of the fiber over $u_0$ we obtain $d$ distinct analytic functions $f_i$.}
 
 Hence,  
by Riemann Removable Singularity Theorem, see e.g. \cite{GR} Theorem 3 page 19, each $f_i$ extends  
to an analytic function on $U_1$.  
\end{proof}

\begin{rmk}\label{rmk_global}
If the coefficients of $P(Z)$ are global analytic functions, and $\Delta_P(X) = X^{\alpha} u(X)$ globally, where $u(X)$ is nowhere vanishing in $\C$,  then we can choose $U=\C^n$ in the former proof. Thus, using the notations of this proof, we see that after a substitution of powers $X_i=Y_i^q$, for $1\leq i\leq r$, we may assume that the roots of $P(Z)$ are global analytic.
\end{rmk}

Given a polynomial $P(X_1, \ldots, X_n,Z) \in \K[X_1, \ldots, X_n,Z]$, where $\K$  is a field of characteristic zero, denote by  $\K_1$ the field generated by the  coefficients of $P$.   Since $\K_1$ is finitely generated over $\Q$ there exists a field embedding $\K_1\hookrightarrow \C$.  
This allows us to extend 
some results from complex polynomials to the polynomials over $\K$.  This is a special case of the Lefschetz principle.  We shall need later two such results.

\begin{prop}\label{restriction}
Let  
\begin{align}\label{polynomialpolynomial}
P(Z)=Z^d+a_1(X) Z^{d-1}+\cdots+a_d(X) \in \K[X][Z] 
\end{align}
be quasi-ordinary (as a polynomial with coefficients in $\K[[X]]$).   Then $(P_{|X_n=0})_{red}$ is quasi-ordinary.  Moreover, the discriminant of $(P_{|X_n=0})_{red}$ divides the discriminant of $P$.  
\end{prop}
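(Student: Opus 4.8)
The plan is to deduce both assertions from the complex analytic case (Proposition \ref{complex}) through the Lefschetz principle, by following the roots of $P$ under the specialization $X_n=0$. Let $\K_1$ be the subfield of $\K$ generated over $\Q$ by the coefficients of $P$ and fix an embedding $\K_1\hookrightarrow\C$. The orders of the discriminant $\Delta_P\in\K[X]$ along the hyperplanes $X_i=0$ are intrinsic, so quasi-ordinariness is preserved and I may assume $\K=\C$. By Proposition \ref{complex} there are $q\in\N\setminus\{0\}$ and roots $\zeta_1,\dots,\zeta_d\in\C\{X_1^{1/q},\dots,X_n^{1/q}\}$ with $P=\prod_i(Z-\zeta_i)$; they are pairwise distinct since $\Delta_P\neq 0$.

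The crucial step is to show that each difference of roots is a monomial times a unit. Setting $W_i=X_i^{1/q}$, fix $i\neq j$; then $\zeta_i-\zeta_j$ is a nonzero element of $\C\{W\}$ whose square divides
\[
\Delta_P=\prod_{k<l}(\zeta_k-\zeta_l)^2=X^{\alpha}U,\qquad U(0)\neq 0 .
\]
Hence the zero-set germ of $\zeta_i-\zeta_j$ lies in $\{W_1\cdots W_n=0\}$, so by the analytic Nullstellensatz each of its irreducible components is a coordinate hyperplane, and therefore
\[
\zeta_i-\zeta_j=X^{\lambda_{ij}}\varepsilon_{ij},\qquad \lambda_{ij}\in\tfrac1q\N^{n},\ \varepsilon_{ij}(0)\neq 0 .
\]
Comparing monomial contents in the first display gives $\alpha=\sum_{i<j}2\lambda_{ij}$.

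Now I specialize $X_n=0$. The series $\overline\zeta_i:=(\zeta_i)_{|X_n=0}$ are the roots of $\overline P:=P_{|X_n=0}$, and by the last display $\overline\zeta_i=\overline\zeta_j$ exactly when the $X_n$-component of $\lambda_{ij}$ is positive; otherwise $\overline\zeta_i-\overline\zeta_j={X'}^{\lambda'_{ij}}(\varepsilon_{ij})_{|X_n=0}$ is a monomial in $X'=(X_1,\dots,X_{n-1})$ times a unit, where $\lambda'_{ij}$ is the tuple of the first $n-1$ coordinates of $\lambda_{ij}$. Choosing representatives $I$ of the classes of coinciding $\overline\zeta_i$, Gauss's lemma yields $(\overline P)_{red}=\prod_{i\in I}(Z-\overline\zeta_i)\in\C[X'][Z]$, so
\[
\Delta_{(\overline P)_{red}}=\prod_{\substack{i<j\\ i,j\in I}}(\overline\zeta_i-\overline\zeta_j)^2={X'}^{\gamma}\cdot(\text{unit}),\qquad \gamma=\sum_{\substack{i<j\\ i,j\in I}}2\lambda'_{ij} .
\]
As $\Delta_{(\overline P)_{red}}$ is a polynomial, each $\gamma_k$ is its order along $X_k=0$, so $\gamma\in\N^{n-1}$ and $(\overline P)_{red}$ is quasi-ordinary. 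Finally the pairs with $i,j\in I$ are among all pairs and every $\lambda'_{ij}\ge 0$, whence $\gamma\le\alpha'$ coordinatewise ($\alpha'$ being the $X'$-part of $\alpha$); thus ${X'}^{\gamma}\mid X^{\alpha}$ and, after absorbing units, $\Delta_{(\overline P)_{red}}\mid\Delta_P$ in $\K[[X]]$.

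I expect the main obstacle to be the passage, in the second paragraph, from the product $\prod(\zeta_k-\zeta_l)^2$ being a monomial times a unit to each individual factor $\zeta_i-\zeta_j$ being so: this is where the existence of the convergent fractional-power-series roots (hence Proposition \ref{complex} and the reduction to $\C$) and the analytic Nullstellensatz in the variables $W_i$ are genuinely needed. The remaining points---legitimacy of the Lefschetz reduction (there is no circularity, since Proposition \ref{complex} is proved independently), integrality of the exponent $\gamma$, and the bookkeeping of which roots collide---are comparatively routine.
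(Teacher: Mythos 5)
Your proof is correct and follows essentially the same route as the paper's: reduce to $\C$ by the Lefschetz principle, invoke Proposition \ref{complex} to write the roots as convergent fractional power series, deduce that each difference of roots is a monomial times a unit, and restrict to $X_n=0$. You merely supply more detail than the paper does, in particular for the divisibility assertion, which the paper leaves implicit.
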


\begin{proof}
Denote $Q(X', Z) = P(X_1, X_2, \ldots, X_{n-1}, 0,Z)$, where $X'= (X_1, X_2, \ldots, X_{n-1})$.  Let 
$Q= \prod Q_i^{m_i}$ be the factorisation into irreducible factors. 
Then $(P_{|X_n=0})_{red}= \prod Q_i $.  We may assume that  $P$ and 
each of the $Q_i$'s are defined over a subfield of $\C$.   Thus, by Proposition \ref{complex},   
 the roots  of $P$ are complex 
analytic after a substitution of powers, we write them as  $Z_1(X),\ldots, Z_d(X)\in\C \{X_1^{1/q}, \ldots X_n^{1/q}\}$ for some $q\in\N$.  Since $\Delta_P (X) = \prod _{i\ne j} (Z_i (X) -Z_j(X))$   
$$ 
Z_{i,j}(X) = Z_i(X) - Z_j(X) = X^{\beta_{ij}} u_{ij} (X) , 
$$
where $\beta_{ij} \in \Q_{\ge 0}$, $u_{ij} \in\C \{X_1^{1/q}, \ldots, X_n^{1/q}\}$, 
$u_{ij}(0) \ne 0$. 
Taking $X_n=0$ we see that the differences of the roots of  $Q_{red}= \prod Q_i$  are 
the restrictions $Z_{ij}|_{X_n=0}$ and hence their product is a monomial times a unit, that is $Q_{red}$ 
is quasi-ordinary. 
\end{proof}

\begin{prop}[\cite{L}, Proposition 1]  \label{prop2} 
Let $P\in \K[X][Z]$ be a polynomial of the form \eqref{polynomialpolynomial} 
such that $a_1=0$ and 
   the discriminant  $\Delta_P (X) = c_0 X^\alpha$, $c_0\in \K \setminus 0$, 
$\alpha \ne 0$.  Then, for each $i= 2, 3, \ldots , d$, $a_i(X) = c_i X^{ i  \alpha/{d(d-1)} }$, $c_i\in \K$.   
\end{prop}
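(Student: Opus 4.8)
The plan is to describe the roots of $P$ explicitly and to show that the hypothesis forces each of them to be a constant multiple of a single fractional monomial. First I would pass to $\K=\C$ with convergent roots: by the Lefschetz principle we may assume the coefficients of $P$ lie in a subfield of $\C$, and then Proposition \ref{complex} gives, after a substitution of powers, a factorisation $P(Z)=\prod_{\ell=1}^d (Z-Z_\ell)$ with $Z_\ell\in\C\{X_1^{1/q},\dots,X_n^{1/q}\}$ and, since $a_1=0$, with $\sum_\ell Z_\ell=0$. Because $\Delta_P=\prod_{i<j}(Z_i-Z_j)^2=c_0X^{\alpha}$ is a monomial, its square root $\prod_{i<j}(Z_i-Z_j)$ is again a monomial in the unique factorisation domain of fractional power series; hence each difference, being a divisor of this monomial, factors as $Z_i-Z_j=X^{\gamma_{ij}}u_{ij}$ with $\gamma_{ij}\in\Q_{\ge 0}^n$ and $u_{ij}(0)\ne 0$.

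The key reformulation is that the conclusion is equivalent to the assertion that every root is a pure fractional monomial $Z_\ell=\zeta_\ell X^{\delta}$ with $\zeta_\ell\in\C$ and $\delta=\alpha/(d(d-1))$. Indeed, if this holds then $P=\prod_\ell (Z-\zeta_\ell X^{\delta})$, so $a_i=(-1)^i e_i(\zeta_1,\dots,\zeta_d)\,X^{i\delta}$ is exactly the required monomial $c_iX^{i\alpha/(d(d-1))}$ (with $c_i=0$ whenever $i\delta\notin\N^n$). So it suffices to prove two things: (i) all exponents $\gamma_{ij}$ coincide, with common value $\delta$; and (ii) all units $u_{ij}$ are constants. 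Granting (i) and (ii), the differences are exact monomials $Z_i-Z_j=(\zeta_i-\zeta_j)X^{\delta}$, and writing $Z_\ell=Z_1+(\zeta_\ell-\zeta_1)X^{\delta}$ and imposing $\sum_\ell Z_\ell=0$ forces $Z_1$, and hence every $Z_\ell$, to be a constant multiple of $X^{\delta}$. Counting the $d(d-1)$ factors in $\Delta_P=\bigl(\prod_{i<j}(\zeta_i-\zeta_j)^2\bigr)X^{d(d-1)\delta}=c_0X^{\alpha}$ then yields $\delta=\alpha/(d(d-1))$.

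The heart of the argument, and the step I expect to be the main obstacle, is deducing (i) and (ii) from the fact that $\Delta_P$ is an \emph{exact} monomial rather than merely a monomial times a unit. Here I would use the comparability property of quasi-ordinary polynomials: from the cocycle identity $(Z_i-Z_j)+(Z_j-Z_k)=Z_i-Z_k$ together with the fact that each difference is a monomial times a unit, one gets that the $\gamma_{ij}$ are totally ordered and satisfy $\gamma_{ik}\ge\min(\gamma_{ij},\gamma_{jk})$, with equality whenever the two exponents differ. Write $Z_i-Z_j=\zeta_{ij}X^{\gamma_{ij}}(1+h_{ij})$ with $\zeta_{ij}\ne 0$ and $h_{ij}\in\m$; then $\Delta_P=c_0X^{\alpha}$ forces $2\sum_{i<j}\gamma_{ij}=\alpha$ and $\prod_{i<j}(1+h_{ij})^2\equiv 1$. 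Suppose the distinct values among the $\gamma_{ij}$ formed a chain of length at least two. Then the relation ``$Z_i$ and $Z_j$ agree beyond the lowest exponent'' partitions the roots into several blocks, and a root of one block differs from the two roots of a splitting pair in another block by monomials whose unit parts differ precisely by the splitting term, so these do \emph{not} cancel in $\prod_{i<j}(1+h_{ij})$ (already in the model case $d=4$ with $Z_1,Z_2=aX\pm bX^2$ and $Z_3,Z_4=-aX\pm bX^2$, the product of the two cross unit parts equals $1-(b/a)^2X^2\ne 1$). This contradicts $\prod_{i<j}(1+h_{ij})^2\equiv 1$, so the chain has length one and all $\gamma_{ij}$ are equal; the same non-cancellation analysis applied to the remaining higher-order terms gives each $h_{ij}=0$, hence (ii). Making this cancellation argument precise and uniform in $d$ is the technical core of the proof.
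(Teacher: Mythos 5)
Your reduction of the statement to ``every root is an exact monomial'' is the right target, and your opening steps (Lefschetz principle, Proposition \ref{complex}, and the observation that each difference $Z_i-Z_j$ divides the exact monomial $\prod_{i<j}(Z_i-Z_j)=\pm\sqrt{c_0}\,X^{\alpha/2}$ and is therefore a monomial times a unit) are correct. But the step you yourself flag as the technical core is a genuine gap, and it cannot be closed the way you propose: the identity $\prod_{i<j}u_{ij}^2=c_0$ does \emph{not} force the units $u_{ij}$ to be constants, nor does any ``non-cancellation'' analysis. Note that your argument nowhere uses that the $a_i$ are \emph{polynomials} in $X$, so if it worked it would prove the statement for power series coefficients --- and there it is false. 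Concretely, take $d=3$, $n=1$, $a_1=0$, $a_2=X^2(1+X)$, and let $a_3=q(X)\in\C\{X\}$ be a square root of $-X^6\bigl(4(1+X)^3+1\bigr)/27$ (it exists because $-\bigl(4(1+X)^3+1\bigr)/27$ is a unit with nonzero value at $0$). Then $\Delta_P=-4a_2^3-27a_3^2=X^6$ exactly, yet $a_2$ is not a monomial. In this example all the roots are $Z_i=XW_i(X)$ with the $W_i$ non-constant units, all $\gamma_{ij}$ are equal, and $\prod_{i<j}(W_i-W_j)^2\equiv 1$ while none of the higher-order terms $h_{ij}$ vanish; so your step (ii), and the $d=4$ model computation meant to support it, both fail.

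The missing ingredient --- and the actual content of the paper's proof --- is the polynomiality of the coefficients. The paper first treats $n=1$ over $\C$: after a substitution of powers the roots $f_i(Y)$ are entire functions of polynomial growth (being roots of a monic polynomial with polynomial coefficients), hence polynomials by Liouville's theorem; since the units of $\C[Y]$ are the nonzero constants, the factorisation of the monomial $\Delta_P(Y^q)=c_0Y^{q\alpha}$ into the differences $f_i-f_j$ forces each difference to be an \emph{exact} monomial $c_{ij}Y^{\beta}$; the cocycle identity $(f_i-f_j)+(f_j-f_k)+(f_k-f_i)=0$ among nonzero monomials then gives a common exponent $\beta=q\alpha/d(d-1)$, and $a_1=0$ makes each $f_i=\sum_j(f_i-f_j)/d$ a monomial. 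The case of general $n$ is then handled one variable at a time, viewing $P$ as a polynomial in $X_n,Z$ over $\K(X_1,\dots,X_{n-1})\hookrightarrow\C$. If you want to keep your structure, you must insert an argument of this kind --- a growth bound plus unique factorisation in a \emph{polynomial} ring, where divisors of monomials are exact monomials --- before drawing any conclusion about the unit parts.
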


\begin{proof}
 Consider first the case $n=1$, $\K=\C$.  By Proposition 
\ref{complex} and Remark \ref{rmk_global}, after a substitution of powers $X=Y^q$, there are analytic functions $f_i(Y), i=1, \ldots , d$, such that 
$$P(Y^q,Z)=  \prod _i (Z-f_i(Y)) . 
$$
As a root of a polynomial  each $f_i$ satisfies 
$$
|f_i(Y)|\le C(1 +|Y|^N)
$$
for $C,N \in \R$, see e.g. \cite{BR}, 1.2.1.  Hence, by Liouville's Theorem, cf. \cite{Ti}, Section 2.52, p. 85, $f_i$  is a polynomial.  By assumption,  
$\Delta_P(Y^q) =c_0 Y^{q\alpha }$, and hence each difference $f_i -f_j$ is a monomial.  For $i,j,k$ distinct we have 
$(f_i-f_j) + (f_j-f_k)+(f_k-f_i)=0$, and therefore all these  monomials should have the same exponent 
$(f_i -f_j)(Y) = c_{i,j} Y^\beta$, where $\beta = \frac {q} {d(d-1)}  \alpha$.  Finally, since $a_1=0$,  each $f_i$ is a monomial  :
$$
f_i = \sum_j \frac {f_i -f_j} d . 
$$

In the general case we consider $P\in \K[X_1,\ldots , X_n][Z]$ as a polynomial in $X_n,Z$ with coefficients in 
$\K'=\K(X_1, \ldots, X_{n-1})$ and $\K' \hookrightarrow \C$.   
Therefore, for every $i$, $a_i$ equals  $X_n^{ i  \alpha_n/{d(d-1)} }$ times a constant of the algebraic closure of $\K'$.    Since $a_i$ is a polynomial in $(X' , X_n)$ it must be equal to $X_n^{ i  \alpha_n/{d(d-1)} }$ times a polynomial in $X'$.  Applying this argument to each variable $X_j$, $j=1, \ldots ,
n$, we see that $a_i$ is the product of all $X_j^{ i  \alpha_j/{d(d-1)} }$ and a constant of $\K$.    This ends the proof. 
\end{proof}


\section{1st proof of Theorem \ref{maintheorem}}\label{maintheoremproof}

Given $P\in \K[[X_1, \ldots ,X_n,Z]]$.  Write 
$$
P(X,Z) = \sum_{(i_1, \ldots, i_{n+1})} P_{i_1, \ldots, i_{n+1}} X^{i_1} \cdots X^{i_n} Z^{i_{n+1}}.
$$
Let $H(P) = \{ (i_1, \ldots, i_{n+1}) \in \N^{n+1}; P_{i_1, \ldots, i_{n+1}} \ne 0\}$.  The Newton polyhedron of $P$ 
is the convex hull in $\R^{n+1}$ of $\bigcup_{a\in H(P)} (a+\R_{\ge 0}^{n+1})$, and we will denote it by $\NP (P)$.  

A Weierstrass polynomial \eqref{polynomial} is called  
\emph {$\nu$-quasi-ordinary} if there is a point $R_1$ of the Newton polyhedron $\NP (P)$, 
$R_1\ne R_0=(0,\ldots, 0,d)$, such that if $R_1'$ denotes the projection of $R_1$ onto $\R^n\times 0$ 
from $R_0$, and $S=|R_0,R_1'|$ is the segment joining $R_0$ and $R'_1$, then 
\begin{enumerate}
\item
$\NP (P) \subset |S| = \bigcup_{s\in S} (s+\R_{\ge 0}^{n+1})$
\item
$P_S= \sum_{(i_1, \ldots, i_{n+1})\in S} P_{i_1, \ldots, i_{n+1}}  X_1^{i_1} \cdots X_n^{i_n} Z^{i_{n+1}}$ is not a power of a 
linear form in $Z$.  
\end{enumerate} 
The second condition is satisfied automatically if $a_1=0$.   

\begin{lemma}\label{lemma3.1}
Let $P(Z)\in \K[[X]][Z]$ be a Weierstrass polynomial \eqref{polynomial} such that $a_1=0$.  
The following conditions are equivalent: 
\begin{enumerate}
\item
$P$ is $\nu$-quasi-ordinary.
\item
$\NP(P)$  has only one compact edge containing $R_0$.  
\item
the ideal $(a_i^{d!/i}(X))_{i=2,\ldots ,d}\subset \K[[X]]$ is monomial and generated by one of  $a_i^{d!/i}(X)$.  
\end{enumerate} 
\end{lemma}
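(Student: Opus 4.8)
The plan is to build a dictionary between $\NP(P)$ and the supports of the coefficients, and then read all three conditions off from it. Since the monomial $X^\gamma Z^{d-i}$ occurs in $P$ exactly when $\gamma\in\Supp(a_i)$, we have $H(P)=\{R_0\}\cup\{(\gamma,d-i)\,:\,2\le i\le d,\ \gamma\in\Supp(a_i)\}$. I would introduce the projection $\pi$ from $R_0=(0,\dots,0,d)$ onto $\R^n\times\{0\}$, which sends $(\gamma,d-i)$ to $(\tfrac{d}{i}\gamma,0)$, and record the elementary observation that $(\gamma,d-i)\in|S|$ for $S=|R_0,(\beta,0)|$ if and only if $\gamma\ge\tfrac{i}{d}\beta$, equivalently $\tfrac{d}{i}\gamma\ge\beta$, componentwise. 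Because the second clause of $\nu$-quasi-ordinariness is automatic when $a_1=0$, condition (1) then reads: the set $T=\{\tfrac{d}{i}\gamma\,:\,2\le i\le d,\ \gamma\in\Supp(a_i)\}\subset\R_{\ge 0}^n$ has a componentwise minimum $\beta\in T$, i.e. $T\subseteq\beta+\R_{\ge 0}^n$.

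For $(1)\Leftrightarrow(2)$ I would argue geometrically, using the standard fact that projection of a polyhedron from a vertex identifies the compact edges of $\NP(P)$ issuing from $R_0$ with the vertices of $\text{conv}(T)+\R_{\ge 0}^n$. The latter polyhedron has a unique vertex precisely when $T$ has a componentwise minimum, so ``exactly one compact edge at $R_0$'' is the same as the existence of the minimal $\beta$ above. This is the most geometric step and the one I would spell out with care, since the correspondence between the star of $R_0$ and the projected Newton polyhedron has to be stated cleanly.

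For $(1)\Leftrightarrow(3)$, which closes the cycle, I would convert the minimality of $\beta$ into divisibility. The key elementary lemma is that for $0\ne f\in\K[[X]]$ and $m\ge 1$ the componentwise minimum of $\Supp(f^m)$ is $m$ times that of $\Supp(f)$; this holds because $\K[[X]]$ is a domain, so the coordinatewise-lowest part of $f$ cannot be annihilated on raising to a power. Assuming (1) and writing $\beta=\tfrac{d}{i_0}\gamma_0$ with $\gamma_0\in\Supp(a_{i_0})$, minimality forces $\gamma_0=\min\Supp(a_{i_0})$, hence $a_{i_0}=X^{\gamma_0}\cdot(\text{unit})$ and $a_{i_0}^{d!/i_0}=X^{(d-1)!\,\beta}\cdot(\text{unit})$ is a monomial times a unit; the inequalities $\tfrac{d}{j}\gamma\ge\beta$ give $\min\Supp(a_j^{d!/j})=\tfrac{d!}{j}\min\Supp(a_j)\ge(d-1)!\,\beta$, i.e. $a_{i_0}^{d!/i_0}$ divides each $a_j^{d!/j}$, so the ideal is monomial and generated by $a_{i_0}^{d!/i_0}$, which is (3). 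Conversely, if $(a_i^{d!/i})_i=(a_{i_0}^{d!/i_0})$ is monomial then $a_{i_0}^{d!/i_0}$ is a monomial times a unit; the power lemma yields $a_{i_0}=X^{\gamma_0}\cdot(\text{unit})$, and the divisibilities $a_{i_0}^{d!/i_0}\mid a_j^{d!/j}$ translate, again through the power lemma, into $\min\Supp(a_j)\ge\tfrac{j}{d}\beta$ with $\beta=\tfrac{d}{i_0}\gamma_0$; hence every exponent of every $a_j$ dominates $\tfrac{j}{d}\beta$, which is (1).

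The factorial $d!$ is there precisely to clear the denominators in the slopes $\tfrac{d}{i}$, so that $a_{i_0}^{d!/i_0}$ and $a_j^{d!/j}$ are genuine elements of $\K[[X]]$ whose monomial behaviour can be compared integrally. I expect the main obstacle to be the bookkeeping that turns componentwise inequalities among the rational projected exponents into integral divisibilities among powers, resting on the power lemma above; a minor point to check throughout is that $R_1'\ne R_0$, which is guaranteed because $P$ is a Weierstrass polynomial, so $0\notin\Supp(a_{i_0})$ and $\gamma_0\ne 0$ (the degenerate case $P=Z^d$, in which no condition holds nontrivially, being excluded).
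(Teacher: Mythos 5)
Your proposal is correct and follows essentially the same route as the paper: both reduce all three conditions to the statement that the projection of $\NP(P)$ from $R_0$ (equivalently your set $T$ of projected support points) is a translate $\beta+\R_{\ge 0}^n$ with the minimum $\beta$ attained at a support point. The only difference is that you make explicit the additivity of $\ord_{X_k}$ under powers (your ``power lemma''), which the paper uses implicitly when passing between $a_{i_0}=X^\gamma\cdot(\text{unit})$ and divisibility among the $a_j^{d!/j}$.
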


\begin{rmk}
The ideal  $(a_i^{d!/i}(X))_{i=2,\ldots ,d}\subset \K[[X]]$ is exactly the idealistic exponent introduced by Hironaka (see \cite{H2}).
\end{rmk}

\begin{proof}[Proof of Lemma \ref{lemma3.1}]
(3) holds if and only if there is $\gamma\in \N^n$ and 
$i_0 \in \{2,\ldots ,d\}$ such 
that $a_{i_0} (X) = X^\gamma U_{i_0}(X)$, $U_{i_0}(0) \ne 0$, and for all $j\in \{2,\ldots ,d\}$, $X^{j\gamma}$ divides 
$a_j^{i_0}$.  Thus we may take $R_1 = (\gamma, d-i_0)$ and conversely by this formula 
$R_1$ defines $i_0$ and $\gamma$.  Thus (1) is equivalent to (3).  

Let us denote by $\pi_0$ the projection from $R_0$ onto $\R^n\times 0$.  Then both (1) and (2) are 
equivalent to $\pi_0(\NP (P))$ being of the form $p+ \R_{\ge 0}^n$ for some $p\in \R^n$.    
\end{proof}

Let $P(Z)\in\K[[X]][Z]$ be a quasi-ordinary polynomial of degree $d$ with $a_1=0$. 
 Let us assume that $P(Z)$ is not $\nu$-quasi-ordinary.  
 Then, as shown in the proof of Theorem 1 of \cite{L}, p. 403, there exists 
$\b=(\b_1,\ldots ,\b_{n+1})\in(\N\setminus \{0\})^{n+1}$ such that 
\begin{enumerate}
\item $L(u):=\b_1u_1+\cdots+\b_{n+1} u_{n+1}-d\b_{n+1}=0$ is the equation of a hyperplane $H$ of $\N^{n+1}$ containing $(0,...,0,d)$, 
\item $H\cap \NP(P)$  is a compact face of $\NP (P)$ of dimension $\ge 2$,
\item $L(\NP(P))\geq 0$.
\end{enumerate}
The existence of such $\b$ can be also shown as follows.  
Each $\b \in \R_{\ge 0}^{n+1}$ defines a face $\Gamma_\b$ of $ \NP(P)$ by
$$
\Gamma_\b = \{ v\in  \NP(P); <\b,v> = \min _{u \in  \NP(P)} <\b,u> \}.
$$
Each face of $ \NP(P)$ can be obtained this way.  Moreover, since the vertices of $ \NP(P)$ have 
integer coefficients, each face can be defined by $\b \in \Q_{\ge 0}^{n+1}$ and even $\b\in\N^{n+1}$ by multiplying it by an integer.  
If one of the coordinates of $\beta $ is zero then $\Gamma_\b$ is not compact.  Thus it suffices to take as 
$\b$ a vector in $(\N\setminus \{0\})^{n+1}$ 
defining a compact  face containing $R_0$ and of dimension $\ge 2$.

Let 
$$P_H(X,Z):=\sum_{i_1,...,i_{n+1}\in H}P_{i_1,...,i_{n+1}}X_1^{i_1}...X_n^{i_n}Z^{i_{n+1}}. $$ 
and define $\tilde P_H$ as $P_H$ reduced.  
 If $\NP(P_H)=H\cap \NP(P)$ is not included in a segment, neither is $\NP(\tilde P_H)$.   
Thus, by Proposition \ref{prop2}, 
there is   $c\in(\K^*)^n$ such that  $\Delta_{\tilde P_H} (c)=0$.  
We show that this contradicts the assumption that $P$ is quasi-ordinary.  

Let 
\begin{align*}
Q(\tilde X_1, \ldots, \tilde X_n,T, Z) = & 
T^{-d\b _{n+1}} P (( c_1 +\tilde X_1)T^{\b_1}, \ldots, (c_n +\tilde X_n)T^{\b_n},ZT^{\b_{n+1}}) \\
= & P_H(c+\tilde  X, Z) +  \sum _{m=1}^\infty P_m(\tilde X_1, \ldots, \tilde X_n, Z) T^m.  
\end{align*}
Write $(c+X)T^\b$ for $ (( c_1 +\tilde X_1)T^{\b_1}, \ldots, (c_n +\tilde X_n)T^{\b_n})$.  If $\Delta_P(X) = 
X^\alpha U(X)$ then the discriminant of $Q$ is given by 
\begin{align*}
\Delta_Q(\tilde X,T)=  T^{-d(d-1)} \Delta_P (( c +\tilde X)T^{\b}) 
= T ^M(c+\tilde X)^{\a}U((c+\tilde X)T^{\b}),
 \end{align*}
where $M=\sum_i \alpha_i \beta_i- d(d-1)$.  
Let $ Q_k (\tilde X,T,Z)= P_H(c+\tilde X,Z) +  \sum _{m=1}^{k-1} P_mT^m$.  Then 
$Q (\tilde X,T,Z) - Q_k (\tilde X,T,Z) \in (T)^k$ and hence $\Delta_Q(\tilde X,T) - \Delta_{Q_k} 
(\tilde X,T) \in (T)^{(d-1)k}$.  That means that for $k$ sufficiently large, $k(d-1) > M$, 
$\Delta_{Q_k}(\tilde X,T)$ equals $T^M U_1(\tilde X,T)$ in $\K[[\tilde X,T]]$, where $U_1(0)\neq 0$.  
Here we use the fact that all $c_i \ne 0$ and hence $(c+\tilde X)^{\a}$ is invertible.

Since $\b_i > 0$ for $i=1, \ldots ,n$, all 
$P_m$ are polynomials  and hence $Q_k$ is a polynomial.  
  By Proposition \ref{restriction},  the discriminant of  
$(P_H(c+\tilde  X, Z))_{red} = \tilde P_H(c+\tilde X, Z)$ divides $\Delta_{Q_k}$, and therefore has to be  nonzero at $\tilde  X=0$.     This contradicts the fact that $\Delta_{\tilde P_H} (c)=0$.  
This ends the proof of Theorem \ref{maintheorem}.   \qed

\begin{corollary}\label{gamma}
If  $\Delta_P (X)= X_1^{\alpha_1} \cdots X_r^{\alpha_r} U(X)$ with $r\le n$, then  there is $\gamma\in \N^r\times 0$ and $i_0 \in \{2,\ldots ,d\}$ such that 
 $a_{i_0} (X) = X^\gamma U_{i_0}(X)$, $U_{i_0}(0) \ne 0$, 
and  $X^{j\gamma}$ divides $a_j^{i_0}$ for all $j\in \{2,\ldots ,d\}$.  
\end{corollary}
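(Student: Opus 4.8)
The plan is to obtain the corollary as a refinement of Theorem \ref{maintheorem}: the main theorem already produces the exponent and the divisibility, and the only extra content here is to locate $\gamma$ in the coordinate subspace $\N^r\times 0$ corresponding to the variables that actually occur in $\Delta_P$. I would do this by a short specialization argument on the discriminant.

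First I would invoke Theorem \ref{maintheorem}. Since $P$ is a quasi-ordinary Weierstrass polynomial with $a_1=0$, the ideal $(a_i^{d!/i})_{i=2,\ldots,d}$ is monomial and generated by one of the $a_i^{d!/i}$, i.e. condition (3) of Lemma \ref{lemma3.1} holds. By the equivalence established at the beginning of the proof of Lemma \ref{lemma3.1}, this gives $\gamma\in\N^n$ and $i_0\in\{2,\ldots,d\}$ with $a_{i_0}(X)=X^\gamma U_{i_0}(X)$, $U_{i_0}(0)\ne 0$, and $X^{j\gamma}\mid a_j^{i_0}$ for every $j\in\{2,\ldots,d\}$. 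This is already everything claimed, except that a priori $\gamma\in\N^n$ rather than $\gamma\in\N^r\times 0$. So the whole task reduces to showing $\gamma_k=0$ for every $k>r$.

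For this I would argue by contradiction. Fix $k\in\{r+1,\ldots,n\}$ and suppose $\gamma_k\ge 1$. Then $X^{j\gamma}\mid a_j^{i_0}$ forces $X_k^{j\gamma_k}\mid a_j^{i_0}$ with $j\gamma_k\ge 2$, so $X_k\mid a_j^{i_0}$, and since $X_k$ is prime in the unique factorization domain $\K[[X]]$ we get $X_k\mid a_j$ for every $j\in\{2,\ldots,d\}$; together with $a_1=0$ this yields $P|_{X_k=0}=Z^d$. The key step is then to use that the discriminant is a universal polynomial in the coefficients $a_1,\ldots,a_d$, hence commutes with the substitution $X_k\mapsto 0$; therefore $\Delta_P|_{X_k=0}=\Delta_{Z^d}=0$ (the polynomial $Z^d$ has a multiple root, as $d\ge i_0\ge 2$), which means $X_k\mid\Delta_P$. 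But $\Delta_P=X_1^{\alpha_1}\cdots X_r^{\alpha_r}U$ with $U(0)\ne 0$ and $k>r$, so $X_k$ divides neither the monomial factor nor the unit $U$, i.e. $X_k\nmid\Delta_P$, a contradiction. Hence $\gamma_k=0$ for all $k>r$, which is the assertion.

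I do not expect a serious obstacle. Once Theorem \ref{maintheorem} and Lemma \ref{lemma3.1} supply $\gamma\in\N^n$, the remaining content is the short chain ``$\gamma_k>0\Rightarrow X_k\mid a_j$ for all $j\Rightarrow P|_{X_k=0}=Z^d\Rightarrow X_k\mid\Delta_P$'', contradicting the absence of $X_k$ in $\Delta_P$ for $k>r$. The one point meriting a line of care is the compatibility of the discriminant with the specialization $X_k\mapsto 0$, which is immediate from its expression as a polynomial in the coefficients; everything else is a direct divisibility computation in $\K[[X]]$.
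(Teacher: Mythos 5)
Your proposal is correct and follows essentially the same route as the paper: the paper's (very terse) proof also invokes Theorem \ref{maintheorem} to get $\gamma\in\N^n$ and then observes, in contrapositive form, that if $X_k\nmid\Delta_P$ then some $a_i$ is not divisible by $X_k$, which forces $\gamma_k=0$. You have merely spelled out the details (primality of $X_k$ in $\K[[X]]$, $P|_{X_k=0}=Z^d$, vanishing of the specialized discriminant) that the paper leaves implicit.
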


\begin{proof}
By   Theorem \ref{maintheorem} there is such $\gamma\in \N^n.  $ 
If $\Delta_P (X)$ is not divisible by 
$X_k$ then there is at least one coefficient $a_i$ that is not divisible by $X_k$.  
\end{proof}


\section{2nd proof of Theorem \ref{maintheorem}.\\}\label{alternativeproof}
First we show that Theorem \ref{AJ} implies Theorem \ref{maintheorem}. This proposition is well known, see \cite{Z} for instance.

\begin{prop}\label{prop}
Let $\K$ be an algebraically closed field of characteristic zero. Let $P(Z)\in \K[[X]][Z]$ be a quasi-ordinary Weierstrass polynomial with $a_1=0$. If there is $q\in\N\setminus \{0\}$  such that $P(Z)$ has its roots in $\K[[X^{\frac{1}{q}},...,X_n^{\frac{1}{q}}]]$ then $P$ is $\nu$-quasi-ordinary.
\end{prop}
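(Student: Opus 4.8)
The hypothesis hands us the roots directly, so I would begin by writing $P(Z)=\prod_{i=1}^d(Z-f_i)$ with $f_i\in R:=\K[[X_1^{1/q},\dots,X_n^{1/q}]]$. The ring $R$ is a regular local ring, hence a unique factorization domain in which the only primes dividing a monomial are the $X_k^{1/q}$. Since $P$ is quasi-ordinary, its discriminant $\Delta_P=\prod_{i<j}(f_i-f_j)^2=X^\a U$ with $U(0)\ne 0$ is a monomial times a unit, so by unique factorization each factor is a monomial times a unit: $f_i-f_j=X^{\beta_{ij}}u_{ij}$ with $u_{ij}(0)\ne 0$ and $\beta_{ij}\in\tfrac1q\N^n$. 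This is the only place the quasi-ordinary hypothesis enters, and it is exactly the observation already used in the proof of Proposition \ref{restriction}. Note that we do not need the Abhyankar--Jung Theorem here, since the roots are provided by assumption.

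The plan is then to argue by contradiction, reusing the geometric set-up of the first proof but replacing its deformation argument by a direct computation with the roots. So suppose $P$ is not $\nu$-quasi-ordinary. Exactly as in Section \ref{maintheoremproof}, there is $\beta=(\beta_1,\dots,\beta_{n+1})\in(\N\setminus\{0\})^{n+1}$ cutting out a compact face $H\ni R_0=(0,\dots,0,d)$ of $\NP(P)$ of dimension $\ge 2$ with $\NP(P_H)$ not contained in a segment; hence $\NP(\tilde P_H)$ is not contained in a segment and, by Proposition \ref{prop2}, there is $c\in(\K^*)^n$ with $\Delta_{\tilde P_H}(c)=0$. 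I will show that this is impossible.

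Next I would identify the roots of $\tilde P_H$ in terms of the $f_i$. Assign $X_k^{1/q}$ the weight $w_k:=\beta_k/\beta_{n+1}$ and $Z$ the weight $1$; the associated monomial valuation $v$ on $R[Z]$ has a polynomial, hence integral, associated graded ring, so initial forms are multiplicative and $P_H=\prod_i\ini_v(Z-f_i)$. Because $R_0\in H$, every root has $v$-order $\rho_i:=v(f_i)\ge 1$, so that
\begin{align*}
P_H \;=\; Z^{\,d-s}\prod_{i\in I}\bigl(Z-\ini_v(f_i)\bigr),\qquad I:=\{\,i:\rho_i=1\,\},\quad s:=|I|.
\end{align*}
Thus the roots of $\tilde P_H$ are the distinct elements of $\{\ini_v(f_i):i\in I\}$, together with $0$ when $s<d$. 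The key point is that these cannot collide on the torus: for $i\ne j$ in $I$ with $\ini_v(f_i)\ne\ini_v(f_j)$ we have $\ini_v(f_i)-\ini_v(f_j)=\ini_v(f_i-f_j)=u_{ij}(0)\,X^{\beta_{ij}}$, a nonzero monomial, which does not vanish at any point of $(\K^*)^n$; and when $s<d$, choosing $k$ with $\rho_k>1$ gives $\ini_v(f_i)=\ini_v(f_i-f_k)=u_{ik}(0)\,X^{\beta_{ik}}$, again a nonzero monomial. Hence no two distinct roots of $\tilde P_H$ agree at any $c\in(\K^*)^n$, so $\Delta_{\tilde P_H}$ does not vanish on the torus, contradicting the previous paragraph. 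Therefore $P$ is $\nu$-quasi-ordinary.

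I expect the main obstacle to be the bookkeeping of the third step: justifying that $P_H$ is literally the product of the initial forms $\ini_v(Z-f_i)$ (multiplicativity of $v$), determining precisely which roots survive in the reduction $\tilde P_H$, and reconciling the fractional-exponent evaluation of the roots at $c$ with the integer-exponent discriminant $\Delta_{\tilde P_H}\in\K[X]$ (one fixes a branch $\zeta$ over $c$ and uses that the relevant differences are monomials, hence branch-independently nonzero). Once the initial forms of the root differences are recognized as monomials the contradiction is immediate; everything else is imported unchanged from the first proof.
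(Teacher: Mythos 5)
Your proof is correct, but it follows a genuinely different and considerably heavier path than the paper's. The paper's argument is direct and elementary: writing $Z_i-Z_j=X^{\beta_{ij}}u_{ij}(X)$ exactly as you do, it invokes Lemma \ref{lemBM} to see that for each fixed $i$ the exponents $\beta_{ij}$ are totally ordered, shows (Lemma \ref{lemma1}) that the minima $\beta_i=\min_{j\ne i}\beta_{ij}$ share a common value $\beta$ and that $a_1=0$ forces $X^{\beta}$ to divide every root (via $Z_i=\sum_k(Z_i-Z_k)/d$), and then exhibits the required data explicitly: $i_0$ is the number of roots with $(Z_i/X^{\beta})(0)\ne 0$ and $\gamma=i_0\beta$. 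You instead argue by contradiction, importing from Section \ref{maintheoremproof} the compact face $H$ of dimension $\ge 2$ and the existence (via Proposition \ref{prop2}) of $c\in(\K^*)^n$ with $\Delta_{\tilde P_H}(c)=0$, and replacing the deformation $Q_k$ used there by the identity $P_H=\ini_v(P)=\prod_i\ini_v(Z-f_i)$ for the monomial valuation attached to $H$. Your verification that the distinct roots of $\tilde P_H$ differ by nonzero monomials, hence that $\Delta_{\tilde P_H}$ cannot vanish on the torus, is sound, including the points you flag: $v(f_i)\ge1$ follows from $\sum_i\min(1,v(f_i))=v(P)=d$, multiplicativity of $\ini_v$ holds because the associated graded ring is a domain, and the evaluation at $c$ through a branch $\zeta$ is branch-independent since the relevant differences are monomials. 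The trade-off is this: your route makes the ``easy'' direction of the equivalence between Theorem \ref{maintheorem} and the Abhyankar--Jung Theorem depend on Proposition \ref{prop2} (hence, through its proof, on the complex-analytic Proposition \ref{complex} and the Lefschetz principle) and on the face-selection argument of Section \ref{maintheoremproof}, whereas the paper's proof needs none of this machinery and constructively produces the monomial generator of the ideal $(a_i^{d!/i})$; conversely, you never need the total-ordering Lemma \ref{lemBM}, and your initial-form description of $P_H$ as $\prod_i\ini_v(Z-f_i)$ is an illuminating complement to the deformation argument of the first proof.
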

\begin{proof}
Let  $P(Z)\in\K[[X_1,...,X_n]][Z]$ be a quasi-ordinary polynomial such that its roots 
 $Z_1(X),\ldots, Z_d(X)\in\K [[X^{1/q}]]$ for some $q\in\N\setminus \{0\}$.  In what follows we assume for simplicity 
$q=1$, 
substituting the powers if necessary.  For $i\ne j$ 
$$ 
Z_{i,j}(X) = Z_i(X) - Z_j(X) = X^{\beta_{ij}} u_{ij} (X) , \qquad u_{ij}(0) \ne 0 . 
$$
For each $i$ fixed, the series $Z_{i,j}$, $j\ne i$, and their differences are normal crossings (that is monomial times a unit).  
By the lemma below,  the set $\{\b_{i,j},\, j=1,...,d\}$ is totally ordered.

\begin{lemma}\label{lemBM}(\cite{Z}, \cite{B-M} Lemma 4.7) 
Let $\alpha,\beta,\gamma \in \N^n$ and let $a(X), b(X), c(X)$ be invertible 
elements of $\K[[X]]$.  If  
$$
a(X) X^\alpha -b(X)X^\beta = c(X)X^\gamma,
$$
then either $\alpha_i \leq\beta_i$ for all $i=1,\ldots, n$ or 
$\beta_i\leq\alpha_i$ for all $i=1,\ldots, n$.
\end{lemma}

\begin{proof}[Proof of Lemma \ref{lemBM}]
For $f=\sum_{\a\in\N^n}f_{\a}X^{\a}\in\K[[X]]$, let Supp$(f):=\{\a\in\N^n\ / f_{\a}\neq 0\}$ be the support of $f$. We always have  $\Supp(c(X)X^{\g})\subset \g+\N^n$ and, since $c(X)$ is invertible, $\g\in\Supp (c(X)X^{\g})$. Since $a(X)X^{\a}-b(X)X^{b}=c(X)X^{\g}$, then $$\Supp(c(X)X^{\g})\subset\Supp(a(X)X^{\a})\cup\Supp(b(X)X^{\b})\subset (\a+\N^n)\cup(\b+\N^n).$$
Thus either $\g\in\a+\N^n$ or $\g\in\b+\N^n$. If $\g\in \a+\N^n$, then $X^{\a}$ divides $X^{\g}$, hence $b(X)X^{\b}$ is divisible by $X^{\a}$ which means that $\a\leq \b$ component-wise.
\end{proof} 

Denote 
$\beta_{i} = \min _{j\ne i} \beta_{i,j}$.  

\begin{lemma}\label{lemma1}
We have $ \beta_{1} =  \beta_{2} = \cdots = \beta_{d}$.  Denote this common exponent by $ \beta$. 
Then each $a_i$ is divisible by $(X^{ \beta})^i$.   
\end{lemma}

\begin{proof}
For $i,j,k$ distinct we have $\beta_{i,j} \ge \min \{ \beta_{i,k}, \beta_ {j,k}\}$  (with the equality if 
$ \beta_{i,k}\ne \beta_ {j,k}$).   Therefore $\beta_{i,j} \ge  \beta_k$ and hence $ \beta_i \ge  \beta_k$.   
This shows $ \beta_{1} =  \beta_{2} = \cdots = \beta_{d}$. 
 Because $a_1=0$,  
$$Z_{i} =  Z_{i} - \frac 1 d \sum_{k=1}^d Z_k = \sum_{k=1}^d \frac {Z_{i}-Z_k} d$$
is divisible by $X^{\beta}$.  
\end{proof}

To complete the proof we show that there is $i_0$ such that $a_{i_0} /X^{i_0\beta} $ 
does not vanish at the origin.    By Lemma 
\ref{lemma1} we may write 
$$
Z_i (X) = X^\beta \tilde Z_i (X) .
$$
Then $i_0$ is the number of $i$ such that $ \tilde Z_i (0) \ne 0$, and then $\gamma = i_0 \beta$. 
\end{proof}

\begin{rmk}
The set $\{\b_{i,j}\}$ determines many properties of the hypersurface germ defined by the quasi-ordinary polynomial $P$ (see for instance \cite{Gau}, \cite{Li}).
\end{rmk}

\begin{rmk}
It is possible to define a change of coordinates of the form $Z'=Z+a(X)$ in such a way that the Newton polyhedron of the quasi-ordinary polynomial $P(Z'-a(X))$ has only one compact face of dimension $\leq 1$ (see \cite{G-P2}). 
\end{rmk}

In order to prove Theorem \ref{maintheorem} we use Proposition \ref{complex}. Hence by Proposition \ref{prop},  Theorem \ref{maintheorem} is true for any $P(Z)\in\C\{X\}[Z]$.

Let $\K$ be any algebraically closed field of characteristic zero. Let $P(Z)\in\K[[X]][Z]$, $P(Z)=Z^d+a_1(X)Z^{d-1}+\cdots+a_d(X)$. Then the coefficients of the $a_i$'s are in a field extension of $\Q$ generated by countably many elements and denoted by $\K_1$. Since trdeg$_{\Q}\C$ is not countable and since $\C$ is algebraically closed, there is an embedding $\K_1\hookrightarrow \C$. Since the conditions of being quasi-ordinary and $\nu$-quasi-ordinary does not depend on the embedding $\K_1\hookrightarrow\C$,  
we may assume that $P(Z)\in\C[[ X_1,...,X_n]][Z]$.

Then let us assume that $P(Z)\in\C[[X_1,...,X_n]][Z]$ such that $a_1=0$ and $\D_P(X)=X^{\a}u(X)$ with $u(0)\neq 0$. Let us remark that $\D(X)=R(a_2(X),...,a_n(X))$ for some polynomial $R(A_2,...,A_d)\in\Q[A_2,...,A_d]$. Let us denote by $Q\in\Q[X_1,...,X_n][A_2,...,A_d,U]$ the following polynomial: 
$$Q(A_2,...,A_d,U):=\D(A_2,...,A_d)-X^{\a}U.$$ 
Then $Q(a_2(X),....,a_d(X),u(X))=0$. By the Artin Approximation Theorem (cf. \cite{Ar} Theorem 1.2), for every 
 integer $j\in\N$, there exist $a_{2,j}(X)$,..., $a_{n,j}(X)$, $u_{j}(X)\in\C\{ X_1,...,X_n\}$ such that 
$$Q(a_{2,j}(X),..., a_{n,j}(X), u_{j}(X))=0,$$
 $a_k(X)-a_{k,j}(X)\in(X)^j$ and $u(X)-u_{j}(X)\in (X)^j$. Let us denote
$$P_j(Z):=Z^d+a_{2,j}(X)Z^{d-2}+\cdots+a_{d,j}(X)\in\C\{ X_1,...,X_n\}[Z].$$ Then $P_j(Z)$ is quasi-ordinary for $j\geq 1$ and $\NP(P)\subset \NP(P_j)+\N^n_{\geq j}$ where 
$$\N^n_{\geq j}:=\{k\in\N^n\ /\ k_1+\cdots+k_n\geq j\}.$$

If $P(Z)$ were not $\nu$-quasi-ordinary then $\NP(P)$ would have a compact face of dimension at least 2 and containing the point $(0,...,0,d)$. For $j>j_0:=\max |\g|$ where $\g$ runs through the vertices of $\NP(P)$, we see that this compact face is also a face of $\NP(P_j)$ and this contradicts the fact that $P_j(Z)$ is $\nu$-quasi-ordinary.
 Thus Theorem \ref{maintheorem} is proven.\\

In fact, by using the Strong Artin Approximation Theorem, we can prove the following result about the continuity of the Newton polyhedra of $P(Z)$ with respect to its discriminant.

\begin{prop}
For any $d\in\N$ and any $\a\in\N^n$, there exists a function $\b:\N\lgw \N$ satisfying the following property:  for any $k\in\N$  and any Weierstrass polynomial $P(Z)=Z^d+a_1Z^{d-1}+\cdots+a_d\in\K[[X_1,...,X_n]][Z]$ of degree $d$ such that $a_1=0$ and its discriminant $\D_P =X^{\a}U(X)$ mod. $(X)^{\b(k)}$ there exists a compact edge $S$ containing $R_0:=(0,...,0,d)$ such that one has $\NP(P)\subset |S|+\N_{\geq k}^n$.
\end{prop}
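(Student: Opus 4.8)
The plan is to use the Strong Artin Approximation Theorem to reduce the statement about $P$, whose discriminant is only \emph{approximately} a monomial times a unit, to the exact situation already settled by Theorem \ref{maintheorem}. The key observation is that the discriminant of a Weierstrass polynomial $Z^d+a_2Z^{d-2}+\cdots+a_d$ with $a_1=0$ is a universal polynomial $\Delta_d(A_2,\ldots,A_d)\in\Z[A_2,\ldots,A_d]$ depending only on $d$, so I may consider the single polynomial system
$$Q(A_2,\ldots,A_d,U):=\Delta_d(A_2,\ldots,A_d)-X^\alpha U\in\K[X][A_2,\ldots,A_d,U],$$
which depends only on $d$, $\alpha$ and $n$. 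The hypothesis $\Delta_P\equiv X^\alpha U(X)\bmod(X)^{\beta(k)}$, with $U(0)\neq 0$, says exactly that $(a_2,\ldots,a_d,U)$ is an approximate solution of $Q=0$ modulo $(X)^{\beta(k)}$.

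First I would invoke the Strong Artin Approximation Theorem for the ring $\K[[X]]$ applied to $Q$. It furnishes an Artin function $\beta:\N\lgw\N$ — depending only on the system $Q$, hence only on $d$, $\alpha$ and $n$, and in particular \emph{not} on $P$ — such that any solution of $Q\equiv 0\bmod(X)^{\beta(k)}$ lifts to an exact solution agreeing with it modulo $(X)^k$. This is the $\beta$ claimed in the statement. Applying it to the approximate solution above yields $\tilde a_2,\ldots,\tilde a_d,\tilde U\in\K[[X]]$ with $\Delta_d(\tilde a_2,\ldots,\tilde a_d)=X^\alpha\tilde U$ and $\tilde a_i\equiv a_i$, $\tilde U\equiv U\bmod(X)^k$. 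For $k\geq 1$ the congruences give $\tilde U(0)=U(0)\neq 0$ and $\tilde a_i(0)=a_i(0)=0$, so $\tilde P(Z):=Z^d+\tilde a_2Z^{d-2}+\cdots+\tilde a_d$ is a quasi-ordinary Weierstrass polynomial with vanishing subleading coefficient.

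Next I would apply Theorem \ref{maintheorem} (equivalently Lemma \ref{lemma3.1}) to $\tilde P$: it is $\nu$-quasi-ordinary, so $\NP(\tilde P)$ has a single compact edge $S$ containing $R_0=(0,\ldots,0,d)$, and in fact $\NP(\tilde P)=|S|$ since $S\subset\NP(\tilde P)$. It remains to transport this back to $P$. Because $\tilde a_i\equiv a_i\bmod(X)^k$, the polynomials $P$ and $\tilde P$ carry exactly the same monomials of $X$-degree $<k$; hence every generator of $\NP(P)$ of $X$-degree $<k$ lies in $\NP(\tilde P)=|S|$, while the remaining generators have $X$-degree $\geq k$. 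This is precisely the assertion $\NP(P)\subset|S|+\N^n_{\geq k}$, understood exactly as in the comparison $\NP(P)\subset\NP(P_j)+\N^n_{\geq j}$ of the second proof of Theorem \ref{maintheorem}, and it produces the required compact edge $S$.

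The main obstacle — and the reason the \emph{Strong} Artin Approximation Theorem rather than the ordinary one is needed — is obtaining a bound $\beta(k)$ that is uniform over \emph{all} polynomials $P$ of the given degree and discriminant exponent. Ordinary Artin approximation would give, for each fixed $P$, some order to which $\tilde P$ approximates it, but with no control independent of $P$; the Strong version packages this control into a single function $\beta$ attached to the fixed system $Q$, which is exactly the quantitative input the statement demands. A secondary point to check is that the Strong Artin Approximation Theorem is available for $\K[[X]]$ over our field $\K$; should one prefer to cite it only over $\C$, one first reduces to $\K=\C$ by the Lefschetz-principle argument already used in Section \ref{alternativeproof}, noting that both the hypothesis and the conclusion are insensitive to the chosen embedding $\K_1\hookrightarrow\C$.
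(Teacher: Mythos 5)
Your proof is correct and follows essentially the same route as the paper's: the same universal system $Q=\Delta_d(A_2,\ldots,A_d)-X^{\alpha}U$, the same appeal to the Strong Artin Approximation Theorem to produce a $\beta$ depending only on $d$, $\alpha$ and $n$, and the same transport of the single compact edge from the exact solution $\tilde P$ (via Theorem \ref{maintheorem}) back to $P$ through the congruence modulo $(X)^k$. The only additions are your (harmless) remarks on $k\geq 1$ and on the Lefschetz-principle reduction, which the paper omits.
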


\begin{proof}
Let $d\in\N$ and $\a\in\N^n$.  Let us denote by $Q\in\Q[[X_1,...,X_n]][A_2,...,A_d,U]$ the  polynomial
$Q(A_2,...,A_d,U):=\D(A_2,...,A_d)-X^{\a}U$
where $\D(A) $ is the discriminant of the polynomial $Z^d+A_2Z^{d-2}+\cdots+A_d$. 

By the Strong Artin Approximation Theorem (cf. \cite{Ar2} Theorem 6.1), there exists a function $\b:\N\lgw \N$ such that for any $k\in\N$ and any $a_2,...,a_d,u\in\K[[X]]$ with $Q(a_2,...,a_d,u)\in(X)^{\b(k)}$ there exist $\ovl{a}_2$,..., $\ovl{a}_d$, $\ovl{u}\in\K[[X]]$ such that $Q(\ovl{a}_2,...,\ovl{a}_d,\ovl{u})=0$ and $\ovl{a}_i-a_i$, $\ovl{u}-u\in (X)^k$ for all $i$.

Let $P(Z)=Z^d+a_2Z^{d-2}+\cdots+a_d$ such that $\D(P)=X^{\a}U(X)$ mod. $(X)^{\b(k)}$. Then there exists a polynomial $\ovl{P}(Z)$ such that its discriminant $\ovl{\D}(\ovl{P})=X^{\a}\ovl{U}(X)$ with $\ovl{U}(0)\neq 0$ and $P(Z)-\ovl{P}(Z)\in (X)^k$. By Theorem \ref{maintheorem} $NP(\ovl{P})\subset |S|$ hence $NP(P)\subset |S|+\N_{\geq k}^n$.
\end{proof}


\section{Applications}\label{applications}

\subsection{Proof of the Abhyankar-Jung Theorem. }\label{AJproof}
 Let 
$$P(Z)=Z^d+a_1(X) Z^{d-1}+\cdots+a_d(X),$$ 
$a_i\in \K[[X]]$, and let $\K$ be an algebraically closed field of characteristic zero.  We suppose that the discriminant 
of $P$ is of the form $X^\alpha U(X)$, $U(0)\ne 0$.  It is not necessary to suppose that all $a_i(0)=0$ (of course 
Theorem \ref{maintheorem} holds if one of the $a_i$'s does  not vanish at the origin).  The procedure consists of a number of steps 
simplifying the polynomial and finally factorising it to two polynomials of smaller degree.  Theorem \ref{maintheorem} is used in Step 2.    \\

\textbf{Step 1.} (Tschirnhausen transformation) 
Replace $Z$ by $Z-\frac{a_1(X)}{d}$.   The  coefficients  
$a= (a_1, a_2, \ldots, a_d)$ are replaced by $(0, \tilde a_2, \ldots, \tilde a_d)$ so we can assume $a_1=0$. \\

\textbf{Step 2.}
Write  $Z= X^\beta \tilde Z$, and divide each $a_i$ by $X^{i\beta}$, where $\beta = 
\gamma/i_0$ for $\gamma$ and $i_0$ given by Corollary \ref{gamma} .  If  the coordinates of $\beta$ are not integers 
this step involves a substitution of powers.  Then 
\begin{align}\label{step2}
P(Z) = P(X^\beta \tilde Z) = X^{d\beta} (\tilde Z^d+ \tilde a_1(X)  \tilde Z^{d-1}+\cdots+\tilde a_d(X) ), 
\end{align}
where $\tilde a_i = a_i/X^{i\beta}$. We replace $P(Z)$  by  $\tilde P(Z) = Z^d+ \tilde a_1(X)  Z^{d-1}+\cdots+\tilde a_d(X)$.   \\

\textbf{Step 3.} Now $\tilde a_{i_0}\ne 0$ and since $\tilde a_1=0$  the polynomial $ Q(Z) = \tilde P(Z)|_{X=0}\in \K[Z]$ has at least two distinct roots in $\K$ and can be factored 
$Q(Z)=Q_1(Z) Q_2(Z)$,  $d_i:= \deg Q_i<d, i=1,2$, 
 where $Q_1(Z)$ and $Q_2(Z)$ are two polynomials of $\K[Z]$ without common root. 
  \\
 
 \textbf{Step 4.} 
By the Implicit Function Theorem there is a factorisation $\tilde P(Z)= \tilde P_1(Z) \tilde P_2(Z)$ with ${\tilde P_i}(Z)_{|X=0}=Q_i(Z)$ for $i=1,2$.  
More precisely, let 
\begin{align*}
& q(Z) =Z^d+a_1 Z^{d-1}+\cdots+a_d, \\
&  q_1(Z)=Z^{d_1}+b_1 Z^{d_1-1}+\cdots+b_{d_1},  \quad q_2(Z)=Z^{d_2}+c_1 Z^{d_2-1}+\cdots+c_{d_2}
\end{align*}
where $a=(a_1, \ldots ,a_d)\in \K^{d}$,  $b=(b_1,\cdots, b_{d_1})\in \K^{d_1}$,  
$c=(c_1,\cdots, c_{d_2})\in \K^{d_2}$.  
The product of polynomials $q=q_1q_2$ defines  a map $a= \Phi (b,c)$, $ \Phi :\K^d \to \K^d$,  that is polynomial  in $b$ and $c$.   
  The  Jacobian determinant of $\Phi$ equals the resultant of $q_1$ and $q_2$.   Denote by $b_0,c_0$ the coefficient   vectors of $Q_1$ and $Q_2$ and consider $\tilde \Phi : \K[[X]]^d \to \K[[X]]^d$ given by 
 $\tilde \Phi (b,c) = \Phi (b+b_0, c+c_0) - \tilde a(0)$.  
Then  the Jacobian determinant of  $\tilde \Phi$   is invertible and hence, by the Implicit Function Theorem
 for formal power series, the inverse of $\tilde \Phi$  is a well-defined power series.  
 Define  $(b(X), c(X))\in \K[[X]]^d$ as  $\tilde \Psi ^{-1} (\tilde a(X)-\tilde a(0)) + (b_0,c_0)$.  Then  $\tilde P(Z)=\tilde  P_1(Z)\tilde P_2(Z)$ where $\tilde P_1(Z)=Z^{d_1}+b_1(X) Z^{d_1-1}+\cdots+b_{d_1}(X)$ and $\tilde P_2(Z)=Z^{d_2}+c_1(X) Z^{d_2-1}+\cdots+c_{d_2}(X)$.\\

We may describe the outcome of Steps 2-4 by the following. 
 Denote the new polynomial obtained in Step 2  
by $\tilde P (\tilde Z)$, where $\tilde Z= Z/X^\beta$, 
\begin{align*}
\tilde P(\tilde Z) = \tilde Z^d+ \tilde a_1(X)  \tilde Z^{d-1}+\cdots+\tilde a_d(X) .  
\end{align*}
Then by Step 4 we may factor $\tilde P = \tilde P_1 \tilde P_2$,  $d_1 = 
\deg \tilde P_1<\deg P$, $d_2 =\deg \tilde P_1<\deg P$,   and 
\begin{align}\label{factorisation}
P(Z) = X^{d\beta} \tilde P (\tilde Z) = X^{d_1\beta} \tilde P_1 (\tilde Z) X^{d_2\beta} \tilde P_2 (\tilde Z)  = 
P_1(Z) P_2(Z).  
\end{align}

The discriminant of $P$ is equal to the product of the discriminants  of $P_1$ and $P_2$ and the square of the resultant of $P_1$ and $P_2$.  
Hence $\Delta_{P_1}$, and similarly $\Delta_{P_2}$, is equal to a monomial times a unit.  
Thus we continue the procedure for $P_1(Z)$ and $P_2(Z)$ until we reduce to polynomials of degree one.  
This ends the proof. 
 \qed\\

Note that if $\K$ is a field of characteristic zero not necessarily algebraically closed then in Step 3 we may need a finite field extension.  Thus we obtain the following result, see \cite {L} the last page proposition.  

\begin{prop}\label{lastprop}
 Let $P \in \K[[X_1,\ldots , X_n]][Z] $ be a quasi-ordinary Weierstrass polynomial with coefficients in a field of characteristic zero 
 (not necessarily algebraically closed).  
 Then there is a finite extension $\K'\supset \K$ such that the roots of $P(Z)$ are in 
$\ \K' [ [X^{\frac 1 q}]]$ for some $q \ge 1$.    
\end{prop}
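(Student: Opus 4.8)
The plan is to re-run the constructive factorisation procedure of Section \ref{AJproof} essentially verbatim, isolating the single place where the algebraic closedness of $\K$ was used — the splitting of $Q$ in Step 3 — and replacing $\K$ there by a suitable finite extension. I would argue by induction on $d=\deg P$, the case $d=1$ being immediate because the unique root $-a_1(X)$ already lies in $\K[[X]]$, so one may take $\K'=\K$ and $q=1$.

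For the inductive step I would first check that Theorem \ref{maintheorem} and Corollary \ref{gamma} remain available even though $\K$ need not be algebraically closed. Viewing $P$ in $\overline{\K}[[X]][Z]$ does not change its discriminant, so $P$ stays quasi-ordinary over $\overline{\K}$ and Corollary \ref{gamma} yields $\gamma\in\N^r\times 0$ and $i_0$ with $a_{i_0}=X^\gamma U_{i_0}$, $U_{i_0}(0)\neq 0$, and $X^{j\gamma}\mid a_j^{i_0}$. These are conditions on the supports and orders of the coefficients $a_i\in\K[[X]]$, hence insensitive to the ground field; thus $\gamma$, $i_0$ and $\beta=\gamma/i_0$ are already defined over $\K$. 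Consequently Steps 1 and 2 (Tschirnhausen transformation, the substitution of powers rendering $\beta$ integral, and division by $X^{d\beta}$) carry over without enlarging the field, producing $\tilde P(Z)=Z^d+\tilde a_2 Z^{d-2}+\cdots+\tilde a_d$ with $\tilde a_{i_0}(0)\neq 0$.

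The only modification occurs in Step 3. The polynomial $Q(Z)=\tilde P(Z)|_{X=0}\in\K[Z]$ still has at least two distinct roots in $\overline\K$, but it may fail to split into coprime factors over $\K$ itself — for example when $Q$ is a power of a single irreducible polynomial of degree $\geq 2$. I would therefore pass to a finite extension $\K'\supset\K$, say a splitting field of $Q$, over which $Q=Q_1Q_2$ with $Q_1,Q_2\in\K'[Z]$ coprime of smaller degree. Step 4 then applies unchanged over $\K'[[X]]$: the resultant of $Q_1$ and $Q_2$ is a nonzero element of $\K'$, so the Implicit Function Theorem for formal power series lifts the factorisation to $\tilde P=\tilde P_1\tilde P_2$ with $\tilde P_i\in\K'[[X]][Z]$, each quasi-ordinary (by the discriminant–resultant formula) and of degree $<d$.

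Applying the inductive hypothesis to $P_1$ and $P_2$ over the field $\K'$ expresses their roots over finite extensions of $\K'$ after further substitutions of powers. The recursion has depth at most $d$ and branches finitely often, so the composite of all finite extensions encountered is again finite over $\K$, and the least common multiple of all the exponents that appear is a single $q\geq 1$; the roots of $P$ then lie in $\K'[[X^{1/q}]]$. The main point to get right is the descent in Step 2: one must verify that the Newton-polyhedron data produced by Corollary \ref{gamma} does not depend on the field, so that every field enlargement is confined to the finitely many splittings in Step 3 and the cumulative extension stays finite.
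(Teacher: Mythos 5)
Your proposal is correct and follows the same route as the paper: the paper obtains Proposition \ref{lastprop} by observing that the factorisation procedure of Section \ref{AJproof} goes through verbatim over a non-algebraically-closed $\K$, except that Step 3 (splitting $Q=\tilde P|_{X=0}$ into coprime factors) may require a finite field extension. Your write-up simply makes explicit the points the paper leaves implicit — that Corollary \ref{gamma} is insensitive to the ground field and that the finitely many extensions accumulated along the recursion compose to a single finite extension $\K'$.
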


\begin{rmk}
It is not true in general that the roots of $\nu$-quasi-ordinary Weierstrass 
polynomials are Puiseux series in several variables. In the latter algorithm, its is not difficult 
to check that in \eqref{factorisation}, $P_1$ and $P_2$ satisfy property (1) of the definition of $\nu$-quasi-ordinary polynomials but not  property (2). For example let
$$P(Z):=Z^4-2X_1X_2(1-X_1-X_2)Z^2+(X_1X_2)^2(1+X_1+X_2)^2.$$
This polynomial is $\nu$-quasi-ordinary and factors as
$$P(Z)=\left(Z^2+2(X_1X_2)^{\frac{1}{2}}Z+X_1X_2(1+X_1+X_2)\right)\left(Z^2-2(X_1X_2)^{\frac{1}{2}}Z+X_1X_2(1+X_1+X_2)\right)$$
in $\K[[X^{\frac{1}{2}}]][Z]$. One has
$$Z^2+2(X_1X_2)^{\frac{1}{2}}Z+X_1X_2(1+X_1+X_2)=(Z+(X_1X_2)^{\frac{1}{2}})^2+X_1X_2(X_1+X_2).$$
This shows that $P(Z)$ is irreducible in $\K[[X]][Z]$ and that none of its roots is a Puiseux series  
in $X_1,X_2$ (all roots 
are branched along $X_1+X_2=0$).  

\end{rmk}


\subsection{Abhyankar-Jung Theorem for Henselian subrings of $K[[X]]$.}

Consider  Henselian subrings of $\K[[X]]$ which do not necessarily have the Weierstrass division property.

\begin{definition}\label{def}
We will consider $\K\{\{ X_1,\ldots ,X_n\}\}$  a subring of $\K[[X_1,\ldots ,X_n]]$ such that:
\begin{enumerate}
\item[i)] $\K\{\{X_1,\ldots ,X_n\}\}$ contains $\K[X_1,...,X_n]$.
\item[ii)] $\K\{\{ X_1,\ldots ,X_n\}\}$ is a Henselian local ring with maximal ideal generated by $X_1,\ldots , X_n$.
\item[iii)] $\K\{\{X_1,\ldots ,X_n\}\}\cap(X_i)\K[[X_1,\ldots ,X_n]]=(X_i)\K\{\{ X\}\}$.
\item[iv)] If $f\in\K\{\{X\}\}$ then $f(X_1^{e_1},\ldots ,X_n^{e_n})\in\K\{\{X\}\}$ for  any $e_i\in \N\setminus \{0\}$. 
\end{enumerate}
\end{definition}

\begin{ex}
The rings of algebraic or formal power series over a field satisfy Definition \ref{def}. If $\K$ is a valued field, then the ring of convergent power series over $\K$ satisfies also this definition. The ring of germs of  quasi-analytic functions over $\R$ also satisfies this definition (even if there is no Weierstrass Division Theorem in this case, see \cite{C} or \cite{ES}).  We come back to this example in the next subsection.  
\end{ex}

Since the Implicit Function Theorem holds for such rings (they are Henselian) we obtain by the procedure  of 
Section \ref{AJproof} the following result.  

\begin{theorem}\label{henselian}
Let $\K\{\{ X_1,\ldots ,X_n\}\}$ be a subring of $\K[[X_1,\ldots ,X_n]]$ like in Definition \ref{def}. Moreover let us assume that $\K$ is an algebraically closed field of characteristic zero. Let $P(Z)\in\K\{\{X\}\}$ be a quasi-ordinary Weierstrass polynomial such that its discriminant, 
$$\Delta (X)= X_1^{\alpha_1} \cdots X_r^{\alpha_r} U(X),$$ $r\le n$, where $\alpha_i $ are positive integers 
 and $U(0)\ne 0$.  Then there exists an integer $q\in\N\setminus \{0\}$ such that the roots of $P(Z)$ are in $\K\{\{X_1^{\frac{1}{q}},\ldots ,X_r^{\frac{1}{q}},X_{r+1},\ldots ,X_n\}\}$.
\end{theorem}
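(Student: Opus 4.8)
The plan is to run the factorisation procedure of Section \ref{AJproof} essentially verbatim, checking at each step that the operations keep us inside $\K\{\{X\}\}$, possibly after a substitution of powers in the variables $X_1,\ldots,X_r$. I would argue by induction on $d=\deg P$, the case $d=1$ being trivial since the unique root $-a_1$ already lies in $\K\{\{X\}\}$. Whenever Theorem \ref{maintheorem} or Corollary \ref{gamma} is invoked, I view $P$ as an element of $\K[[X]][Z]$ via the inclusion $\K\{\{X\}\}\subset\K[[X]]$; these results then supply $\gamma\in\N^r\times 0$ and $i_0$ with the required divisibility of $a_j^{i_0}$ by $X^{j\gamma}$ holding in $\K[[X]]$.

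For the inductive step I would carry out the four steps as follows. Step 1 (Tschirnhausen) replaces the coefficients by polynomial expressions in the $a_i$ and in $1/d$, hence stays in $\K\{\{X\}\}$ (which contains $\K$) and does not change the discriminant, so afterwards $a_1=0$. In Step 2 I first substitute $X_i\mapsto X_i^{i_0}$ for $i=1,\ldots,r$, which by condition iv) of Definition \ref{def} preserves membership in $\K\{\{X\}\}$ and turns $\Delta_P$ into another monomial times a unit; this makes $\beta=\gamma/i_0$ an integer vector supported on the first $r$ coordinates, so $X^\beta$ is a genuine monomial. The division $\tilde a_i=a_i/X^{i\beta}$ takes place in $\K[[X]]$ by Corollary \ref{gamma}, and condition iii) of Definition \ref{def}, applied one variable at a time, upgrades it to a division inside $\K\{\{X\}\}$. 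The resulting $\tilde P$ has $\tilde a_1=0$, $\tilde a_{i_0}(0)\ne 0$, and discriminant $\Delta_{\tilde P}=\Delta_P/X^{d(d-1)\beta}$, again a monomial supported on $X_1,\ldots,X_r$ times a unit.

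Steps 3 and 4 then proceed as in Section \ref{AJproof}. Since $\tilde a_1=0$ and $\tilde a_{i_0}(0)\ne 0$, the polynomial $Q(Z)=\tilde P(Z)|_{X=0}\in\K[Z]$ is neither $Z^d$ nor a $d$-th power of a linear form, so, $\K$ being algebraically closed, it factors as $Q=Q_1Q_2$ into coprime factors of degrees $d_1,d_2<d$. Here I replace the Implicit Function Theorem for formal power series by the hypothesis that $\K\{\{X\}\}$ is Henselian (condition ii)): Hensel's lemma lifts this coprime factorisation to $\tilde P=\tilde P_1\tilde P_2$ with $\tilde P_i\in\K\{\{X\}\}[Z]$ and $\tilde P_i|_{X=0}=Q_i$. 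Because $\mathrm{Res}(\tilde P_1,\tilde P_2)(0)=\mathrm{Res}(Q_1,Q_2)\ne 0$ the resultant is a unit, and the identity $\Delta_{\tilde P}=\Delta_{\tilde P_1}\Delta_{\tilde P_2}\,\mathrm{Res}(\tilde P_1,\tilde P_2)^2$ shows each $\Delta_{\tilde P_i}$ divides a monomial times a unit, hence is itself of that form. Thus $\tilde P_1,\tilde P_2$ satisfy the hypotheses of the theorem in smaller degree, and by induction their roots lie in $\K\{\{X_1^{1/q'},\ldots,X_r^{1/q'},X_{r+1},\ldots,X_n\}\}$ for some $q'$. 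Since the roots of $P$ are $X^\beta$ times the roots of $\tilde P$, and accounting for the earlier substitution $X_i\mapsto X_i^{i_0}$, all roots of $P$ lie in $\K\{\{X_1^{1/q},\ldots,X_r^{1/q},X_{r+1},\ldots,X_n\}\}$ for a suitable common $q$, once more using condition iv) to reinterpret the power substitutions inside the ring.

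The genuine mathematical content has already been spent in Theorem \ref{maintheorem}; what remains is bookkeeping, and the only point demanding real care is Step 2. There the monomial division could a priori throw us out of $\K\{\{X\}\}$, and it is precisely condition iii) of Definition \ref{def} that forbids this, while condition iv) is what legitimises the substitutions of powers. I would therefore expect the writing to focus on making explicit that iii) yields monomial divisibility inside $\K\{\{X\}\}$ (by iterating the single-variable statement) and that every power substitution performed is one permitted by iv), with the Henselian factorisation of Step 4 being the one place where condition ii) is essential.
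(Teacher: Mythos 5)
Your proposal is correct and follows exactly the route the paper intends: the paper's own proof of Theorem \ref{henselian} is a one-line remark that the procedure of Section \ref{AJproof} goes through because the Implicit Function Theorem holds in Henselian rings, and your write-up simply makes explicit the bookkeeping (condition iii) for the monomial divisions of Step 2, condition iv) for the power substitutions, condition ii) for the lifting in Step 4) that the paper leaves implicit.
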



\subsection{Quasi-analytic functions.}\label{coeffs}
Denote by  $\mathcal E_n$  the algebra of complex valued $C^\infty$ germs of 
$n$ real variables: $f:(\R^n,0)\to  \C$.  We call a subalgebra $\mathcal C_n \subset \mathcal E_n$ \emph{quasi-analytic}  
if the Taylor series morphism $\mathcal C_n \to \C [[ X_1, \ldots, X_n]]$ is injective.  
If this is the case we identify $\mathcal C_n$ with its image in $\C [[X_1, \ldots, X_n]]$.  
Usually one considers families of algebras $\mathcal C_n$ defined for all $ n \in \N$ and satisfying some 
additional properties, such as stability by differentiation, taking implicit functions, composition, etc.,  see \cite {T}.  

If  $\mathcal C_n$ is Henselian in the sense of Definition \ref{def}, that is practically always the case,  then we may apply Theorem \ref{henselian}.  Since the arguments of quasi-analytic functions are real, the substitution of powers $X_i=Y_i^{\gamma_i}$ is not surjective if one of $\gamma_i$'s is even.  Thus for the sake of applications, 
cf. \cite {R},  it is natural to consider the power substitutions with signs $X_i=\varepsilon_i Y_i^{\gamma_i}$, 
$\varepsilon_i  = \pm 1$.  Thus Theorem \ref{henselian} implies the following.

\begin{theorem}[Abhyankar-Jung Theorem for quasi-analytic germs]\label{AJqa}
Let $\mathcal C_n$ be a quasi-analytic algebra satisfying Definition \ref{def}.  Let the discriminant 
$\Delta_P (X)$ of 
\begin{align*}
P(Z)=Z^d+a_1(X_1, \ldots , X_n) Z^{d-1}+\cdots+a_d(X_1, \ldots , X_n) \in \mathcal C_n [Z]
\end{align*}
 satisfy 
$\Delta_P (X)= X_1^{\alpha_1} \cdots X_r^{\alpha_r} U(X)$, 
where $U(0)\ne 0$, and $r\le n$.  
Then there is $\gamma \in (\N\setminus \{0\})^r$ such that for every combination of signs 
$\varepsilon \in \{-1,1\}^r$,   the polynomial 
\begin{align*}
Z^d+a_1(\varepsilon _1 Y_1^{\gamma_1}, \ldots ,\varepsilon _r Y_r^{\gamma_r}, Y_{r+1}, \ldots , Y_n) 
 Z^{d-1}+\cdots+a_d(\varepsilon _1 Y_1^{\gamma_1}, \ldots ,\varepsilon _r Y_r^{\gamma_r}, Y_{r+1}, \ldots , Y_n) 
\end{align*}
has $d$ distinct roots in $\mathcal C_n$.  
\end{theorem}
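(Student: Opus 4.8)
The plan is to deduce the statement from Theorem \ref{henselian} applied to the finitely many polynomials obtained from $P$ by reflecting the first $r$ variables, and then to pass to a common power so that a single exponent vector $\gamma$ serves all sign combinations at once. Note first that the conclusion of the theorem already presupposes that $a_j(\varepsilon_1 Y_1^{\gamma_1},\ldots,\varepsilon_r Y_r^{\gamma_r},Y_{r+1},\ldots,Y_n)\in\mathcal C_n$, so the closure of $\mathcal C_n$ under the sign-twisted power substitutions is built into the statement rather than being an extra hypothesis I must smuggle in.

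For each sign vector $\varepsilon\in\{-1,1\}^r$ I would introduce
\[
P_\varepsilon(X,Z):=P(\varepsilon_1 X_1,\ldots,\varepsilon_r X_r,X_{r+1},\ldots,X_n,Z).
\]
Its coefficients are the compositions $a_j(\varepsilon_1 X_1,\ldots)$, which lie in $\mathcal C_n$ because a quasi-analytic algebra is stable under the linear reflections $X_i\mapsto -X_i$; this is part of the usual composition-stability of such classes and preserves both quasi-analyticity (the Taylor morphism stays injective after composing with a reflection) and the Henselian structure of Definition \ref{def}. Each $P_\varepsilon$ is again a quasi-ordinary Weierstrass polynomial: its coefficients still vanish at the origin, and its discriminant equals $\Delta_P(\varepsilon_1 X_1,\ldots,\varepsilon_r X_r,X_{r+1},\ldots,X_n)=\pm X_1^{\alpha_1}\cdots X_r^{\alpha_r}\,U(\varepsilon_1 X_1,\ldots)$, a monomial times a unit since the evaluation of $U$ at the origin is unchanged and nonzero.

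Next I would apply Theorem \ref{henselian} to each of the $2^r$ polynomials $P_\varepsilon$ (here $\K=\C$ is algebraically closed of characteristic zero), obtaining for every $\varepsilon$ an integer $q_\varepsilon\ge 1$ such that $P_\varepsilon(Y_1^{q_\varepsilon},\ldots,Y_r^{q_\varepsilon},Y_{r+1},\ldots,Y_n,Z)$ splits into $d$ factors $Z-g_i(Y)$ with $g_i\in\mathcal C_n$. Since there are only finitely many sign vectors, I set $Q:=\prod_\varepsilon q_\varepsilon$ and $\gamma:=(Q,\ldots,Q)\in(\N\setminus\{0\})^r$. Substituting $Y_i\mapsto Y_i^{Q/q_\varepsilon}$ into the splitting of $P_\varepsilon(Y^{q_\varepsilon},Z)$ and invoking property (iv) of Definition \ref{def} shows that $P_\varepsilon(Y_1^{Q},\ldots,Y_r^{Q},Y_{r+1},\ldots,Y_n,Z)=\prod_i\bigl(Z-g_i(Y^{Q/q_\varepsilon})\bigr)$ still splits over $\mathcal C_n$. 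As this polynomial is exactly the one appearing in the statement for this $\varepsilon$ and this $\gamma$, a single $\gamma$ covers all sign combinations.

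Finally, the $d$ roots are distinct: the discriminant of the split polynomial is $\Delta_{P_\varepsilon}(Y^Q)=\pm Y_1^{Q\alpha_1}\cdots Y_r^{Q\alpha_r}\,\tilde U(Y)$ with $\tilde U(0)\ne 0$, hence a nonzero element of $\mathcal C_n$; since it equals $\prod_{i\ne j}(g_i-g_j)$ up to a unit, no two roots can coincide. The step I expect to be the main obstacle, and the only one not formally contained in Definition \ref{def}, is the stability of $\mathcal C_n$ under the reflections $X_i\mapsto -X_i$: clause (iv) supplies only the positive power substitutions, so I would justify the reflections separately from the general composition properties of quasi-analytic classes (or by a direct check that the Taylor morphism stays injective under reflection). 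Once this is granted, the whole argument is a clean finite reduction to Theorem \ref{henselian}.
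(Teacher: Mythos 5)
Your proposal is correct and follows essentially the same route as the paper, which simply applies Theorem \ref{henselian} (via the Henselian procedure of Section \ref{AJproof}) and observes that for real arguments one must allow the signed power substitutions $X_i=\varepsilon_i Y_i^{\gamma_i}$; your reduction to the $2^r$ reflected polynomials $P_\varepsilon$ and the passage to a common exponent via property (iv) is exactly how this is meant to be carried out. You are also right to flag that stability of $\mathcal C_n$ under the reflections $X_i\mapsto -X_i$ is not literally contained in Definition \ref{def} and must come from the composition properties of the quasi-analytic class --- a point the paper leaves implicit.
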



\section{Toric Case}\label{toric}
We thank Pedro Gonz\'alez P\'erez who pointed out that our proof of Abhyankar-Jung Theorem may be generalised to the toric case. This is the aim of this section.

Let $\s\subset\R^n$ be a rational strictly convex polyhedral cone of dimension $d$. Let 
$$\s^{\vee}:=\{v\in (\R^n)^*\ /\ \langle v,u\rangle\geq 0,\ \forall u\in\s\}$$
be the dual cone of $\s$. Let $V_{\s}:=\text{Spec}\left(\K[X^v\ /\ v\in\s^{\vee}\cap\Z^d]\right)$ the associated affine toric variety. The ideal $\m_{\s}$ generated by the $X^v$, when $v$ runs through $\s^{\vee}\cap(\Z^d)^*$, is a maximal ideal defining a closed point of $V_{\s}$ denoted by $0$. In fact $\K[X^v\ /\ v\in\s^{\vee}\cap\Z^d] \simeq \K[Y]/I$ where $Y=(Y_1,...,Y_m)$ for some integer $m$ and $I$ is a binomial ideal. In this case $\m_{\s}\simeq (Y)$.

When $\K=\C$ we  define $\O_{V_{\s},0}:=\C\{X^v\}_{v\in\s^{\vee}\cap\Z^d}\simeq\C\{Y\}/I$ the ring of germs of analytic functions at $(V_{\s},0)$.

Let $P(Z)=Z^d+a_1Z^{d-1}+\cdots+a_d\in\K[[X^v]]_{v\in\s^{\vee}\cap\Z^d}[Z]$ be a toric polynomial. The polynomial $P(Z)$ is called \textit{quasi-ordinary} if  its discriminant equals $X^{\a}U(X)$, $\a\in\s^{\vee}$ and $U(X)$ being a unit of $\K[[X^v]]_{v\in\s^{\vee}\cap\Z^d}$. We call $P(Z)$ \textit{a Weierstrass polynomial} if $a_i\in\m_{\s}$ for $i=1,...,d$. In \cite{G-P1}, P. Gonz\'alez P\'erez proved the following theorem:
\begin{theorem}\cite{G-P1}\label{GP}
Let $P(Z)\in\C\{X^v\}_{v\in\s^{\vee}\cap\Z^d}[Z]$ be a toric quasi-ordinary polynomial. Then there exists $q\in\N$ such that $P(Z)$ has its roots in $\C\{X^v\}_{v\in\s^{\vee}\cap\frac{1}{q}\Z^d}$.
\end{theorem}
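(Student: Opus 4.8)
The plan is to adapt the proof of Proposition \ref{complex} to the toric setting, replacing the polydisc $U$ and its coordinate-hyperplane branch locus by the affine toric variety $V_{\s}$ and its toric boundary. Write $T\subset V_{\s}$ for the dense torus orbit, the $d$-dimensional complex torus $T\cong(\C^*)^d$ with character lattice $\Z^d$; it is exactly the locus $\{X\in V_{\s};\ X^v\ne 0\ \text{for all}\ v\in\s^{\vee}\cap\Z^d\}$. First I would fix a neighbourhood $\mc U$ of $0$ in the analytic space $V_{\s}$, obtained by intersecting an embedding $V_{\s}\hookrightarrow\C^m$ with a small polydisc chosen invariant under the compact-torus and the contracting $\R_{>0}$-actions, and on which the coefficients $a_i$ converge. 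Then $\{(X,Z)\in\mc U\times\C;\ P(X,Z)=0\}\to\mc U$ is a finite branched covering. Since $P$ is quasi-ordinary its discriminant is $X^{\a}U$ with $\a\in\s^{\vee}$ and $U$ a unit; as every character is invertible on $T$, the monomial $X^{\a}$ does not vanish on $T$, so after shrinking $\mc U$ so that $U$ has no zero on $\ovl{\mc U}$ the branch locus lies in the toric boundary $V_{\s}\setminus T$. Hence over $\mc U^*:=\mc U\cap T$ the projection restricts to an unramified covering of degree $d$.

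The second step trivialises this covering by a lattice base change, exactly paralleling the footnote in the proof of Proposition \ref{complex}. Because $\mc U$ is invariant, $\mc U^*$ deformation retracts onto the compact real torus $(S^1)^d\subset T$, so $\pi_1(\mc U^*)\cong N$, the cocharacter lattice dual to $\Z^d$. The monodromy of the degree-$d$ covering is a homomorphism $N\to S_d$ whose image, being a quotient of the abelian group $N$, is finite abelian of some exponent $q$ (dividing $d!$); hence $qN$ lies in its kernel. The covering of $T$ attached to the sublattice $qN\subset N$ is precisely the finite toric morphism $T_{\frac1q\Z^d}\to T$ induced by the lattice inclusion $\Z^d\subset\frac1q\Z^d$, that is the substitution of powers $X^v\mapsto X^{v/q}$. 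Pulling $P$ back along this cover makes the monodromy trivial, so over the torus part of $V_{\s,\frac1q\Z^d}$ the polynomial factors as $\prod_i(Z-f_i)$ with each $f_i$ holomorphic and single-valued; as a root of a monic polynomial with bounded coefficients, each $f_i$ is locally bounded.

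Finally I would extend the $f_i$ across the boundary. Since the cone $\s$ is unchanged and $\frac1q\Z^d$ is again a lattice, $V_{\s,\frac1q\Z^d}$ is again a normal affine toric variety, and the $f_i$ are holomorphic and bounded on the complement of its toric boundary, which is a nowhere dense analytic subset. By the second Riemann extension theorem on a normal complex space, each $f_i$ extends holomorphically across the boundary, so $f_i\in\O_{V_{\s,\frac1q\Z^d},0}=\C\{X^v\}_{v\in\s^{\vee}\cap\frac1q\Z^d}$, and $P(Z)=\prod_i(Z-f_i)$ with coefficients in this ring, as claimed.

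The main obstacle is this last extension step: unlike the smooth polydisc of Proposition \ref{complex}, the toric variety $V_{\s}$ and its cover are in general singular, so the classical Riemann removable singularity theorem does not apply verbatim. What rescues the argument is that the strict convexity of $\s$ makes $V_{\s}$ \emph{normal}, and on a normal complex space a locally bounded holomorphic function defined off an analytic subset of codimension $\ge 1$ extends holomorphically. A secondary point needing care is the identification of $\pi_1(\mc U^*)$ with $N$ and of the trivialising cover with a toric variety having the same cone and the refined lattice $\frac1q\Z^d$; this is what produces the specific exponent $q$ and places the roots in $\C\{X^v\}_{v\in\s^{\vee}\cap\frac1q\Z^d}$ rather than in some larger ramified extension.
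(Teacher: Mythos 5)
Your argument is correct, but note that the paper does not actually prove Theorem \ref{GP}: it is imported from \cite{G-P1} and used as a black box in the proof of Theorem \ref{theo'}. What you have written is the natural toric generalisation of the paper's own proof of Proposition \ref{complex}, and it correctly supplies the two ingredients that do not carry over verbatim from the polydisc case. First, the local fundamental group: for an invariant neighbourhood $\mc U$ of the distinguished point, $\mc U\cap T$ is, in logarithmic polar coordinates, the product of the compact torus with a convex open subset of $N\otimes\R$ cut out by the half-spaces $\langle v_i,\cdot\rangle<\log\e$ (nonempty because $\s$ is full-dimensional, which the whole setup of Section \ref{toric} tacitly assumes since otherwise $\m_{\s}$ is not maximal); hence $\mc U\cap T$ is connected with $\pi_1\cong N$, the monodromy $N\to S_d$ has image that is finite because it lies in $S_d$ and abelian because it is a quotient of $N$, and killing it by restricting to $qN\subset N$ is dually the lattice refinement $\Z^d\subset\frac{1}{q}\Z^d$ with the cone $\s$ unchanged --- which is exactly why the roots land in $\C\{X^v\}_{v\in\s^{\vee}\cap\frac{1}{q}\Z^d}$ and not in some larger ramified ring. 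Second, the extension step: you rightly replace the classical Riemann removable singularity theorem by its analogue on normal complex spaces, normality of $V_{\s,\frac{1}{q}\Z^d}$ being automatic since $\s^{\vee}\cap\frac{1}{q}\Z^d$ is a saturated semigroup, and the roots are locally bounded as roots of a monic polynomial with bounded coefficients. Together with the observation that the identity $P=\prod_i(Z-f_i)$ extends from the dense torus part by continuity, this closes the proof. In substance your route coincides with the proof in \cite{G-P1} and with the paper's treatment of the smooth case; what it buys is that Section \ref{toric} no longer depends on the citation.
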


Here we will prove a generalisation of this result over any algebraically closed field $\K$ of characteristic zero. 

\begin{theorem}[Toric Abhyankar-Jung Theorem]\label{toricAJ}
Let $\K\{\{ X_1,\ldots ,X_n\}\}$ be a subring of $\K[[X_1,\ldots ,X_n]]$ like in Definition \ref{def}. Moreover let us assume that $\K$ is an algebraically closed field of characteristic zero. Let  
$P \in \K\{\{X^v\}\}_{v\in\s^{\vee}\cap\Z^d}[Z]$ be a toric quasi-ordinary Weierstrass polynomial. 
Then there is $q\in\N\setminus \{0\}$ such that $P(Z)$ has its roots in 
$\K\{\{X^v\}\}_{v\in\s^{\vee}\cap\frac{1}{q}\Z^d}$.
\end{theorem}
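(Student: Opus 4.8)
The plan is to transcribe the second proof of Theorem \ref{maintheorem} from Section \ref{alternativeproof} into the toric setting, using Theorem \ref{GP} in place of the analytic input Proposition \ref{complex}, and then to feed the resulting combinatorial data into the factorisation procedure of Section \ref{AJproof}, which uses only the Implicit Function Theorem and therefore carries over verbatim to any Henselian ring as in Definition \ref{def}. Throughout, the semigroup $\sigma^\vee\cap\Z^d$ plays the role previously played by $\N^n$: a ``monomial times a unit'' now means $X^\gamma U$ with $\gamma\in\sigma^\vee\cap\frac1q\Z^d$ and $U(0)\neq 0$, and divisibility $X^\gamma\mid X^\delta$ means $\delta-\gamma\in\sigma^\vee$. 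After a Tschirnhausen transformation we may assume $a_1=0$.

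\textbf{Phase 1 (toric analogue of Theorem \ref{maintheorem}).} First I would prove the toric form of Corollary \ref{gamma}: there exist $\gamma\in\sigma^\vee\cap\Z^d$ and $i_0\in\{2,\dots,d\}$ with $a_{i_0}=X^\gamma U_{i_0}$, $U_{i_0}(0)\neq 0$, and $X^{j\gamma}\mid a_j^{i_0}$ for all $j$. This I would do in three steps, exactly paralleling Section \ref{alternativeproof}. (i) Complex analytic case: by Theorem \ref{GP} the roots $Z_1,\dots,Z_d$ lie in $\C\{X^v\}_{v\in\sigma^\vee\cap\frac1q\Z^d}$, and the argument of Proposition \ref{prop} applies once one knows each difference $Z_i-Z_j=X^{\beta_{ij}}u_{ij}$ is a monomial times a unit with $\beta_{ij}\in\sigma^\vee\cap\frac1q\Z^d$; here the comparability Lemma \ref{lemBM} is replaced by its conical version (if $aX^\alpha-bX^\beta=cX^\gamma$ with $a,b,c$ units then $\alpha-\beta\in\sigma^\vee$ or $\beta-\alpha\in\sigma^\vee$), proved by the same support inclusion $\Supp(\,\cdot\,)\subset\gamma+(\sigma^\vee\cap\frac1q\Z^d)$. (ii) Formal complex case: the analytic toric local ring $\O_{V_\sigma,0}=\C\{X^v\}_\sigma$ is excellent and Henselian, so Artin Approximation applies to the discriminant equation $\Delta(a_2,\dots,a_d)=X^\alpha U$; approximating a formal solution by a convergent one $P_j$ with $\NP(P)\subset\NP(P_j)+\N^n_{\geq j}$ and invoking (i) transfers the Newton-polyhedron structure to $\C[[X^v]]_\sigma$. (iii) Arbitrary $\K$: the coefficient field is generated over $\Q$ by countably many elements, hence embeds into $\C$, and the conclusion is a combinatorial condition on the support of $P$ independent of the embedding.

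\textbf{Phase 2 (factorisation procedure).} With $\gamma$ and $i_0$ in hand I would run Steps 1--4 of Section \ref{AJproof} inside $\K\{\{X^v\}\}_{\sigma^\vee\cap\Z^d}$: set $\beta=\gamma/i_0\in\sigma^\vee\cap\frac1{i_0}\Z^d$, substitute $Z=X^\beta\tilde Z$ and divide each $a_i$ by $X^{i\beta}$, passing to the lattice-refined ring $\K\{\{X^v\}\}_{\sigma^\vee\cap\frac1q\Z^d}$ with $q$ a multiple of $i_0$; the resulting $\tilde a_{i_0}(0)\neq 0$, so $\tilde P|_{X=0}\in\K[Z]$ has at least two distinct roots and, as $\K$ is algebraically closed, factors into two coprime polynomials, which I lift by the Implicit Function Theorem to $\tilde P=\tilde P_1\tilde P_2$. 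The resultant $\mathrm{Res}(\tilde P_1,\tilde P_2)$ is a unit, since at $X=0$ it is the nonzero resultant of the coprime factors; and by Phase 1 the pairwise differences of the roots of $\tilde P$ are monomials times units, so each $\Delta_{\tilde P_i}$, being a product of such differences, is again a monomial times a unit. Hence each factor is toric quasi-ordinary and the induction on $\deg$ proceeds until every factor has degree one, which yields the roots in $\K\{\{X^v\}\}_{\sigma^\vee\cap\frac1q\Z^d}$.

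\textbf{Main obstacle and loose ends.} The delicate point is step (i) of Phase 1: that the differences $Z_i-Z_j$ of the roots are monomials times units. Over $\K[[X^{1/q}]]$ this was immediate from unique factorisation, but $\C\{X^v\}_{\sigma^\vee\cap\frac1q\Z^d}$ is in general not a UFD, which is precisely the content of a toric singularity. I expect to resolve this from the structure theory of toric quasi-ordinary branches underlying Theorem \ref{GP} (the characteristic monomials of Gonz\'alez P\'erez), or, failing a direct citation, by pulling back along a regular subdivision of $\sigma$, over which the ring is regular, applying the classical UFD argument, and checking that the resulting exponents descend to $\sigma^\vee\cap\frac1q\Z^d$. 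The remaining bookkeeping is routine: one must verify that the lattice-refined ring $\K\{\{X^v\}\}_{\sigma^\vee\cap\frac1q\Z^d}$ again satisfies Definition \ref{def} (this is the toric form of condition (iv), corresponding to the finite toric morphism induced by $\Z^d\hookrightarrow\frac1q\Z^d$ for the same cone $\sigma$), and that conditions (i)--(iii) keep every step inside the Henselian subring rather than merely its completion.
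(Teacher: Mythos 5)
Your proposal follows essentially the same route as the paper: the toric $\nu$-quasi-ordinary statement is obtained exactly as in Section \ref{alternativeproof} (Theorem \ref{GP} replacing Proposition \ref{complex}, Lemma \ref{lemmaBM} replacing Lemma \ref{lemBM}, Artin approximation for $\C\{Y\}/I$, and the embedding of the coefficient field into $\C$), and the roots are then produced by Steps 1--4 of Section \ref{AJproof} with Hensel's lemma for the Henselian local ring $\K\{\{Y\}\}/I$ in place of the Implicit Function Theorem. The ``obstacle'' you flag --- that $\C\{X^v\}_{v\in\s^{\vee}\cap\frac{1}{q}\Z^d}$ need not be a UFD, so the factorisation of $\Delta_P$ into the differences $Z_i-Z_j$ does not immediately show each difference is a monomial times a unit --- is a genuine subtlety that the paper passes over in silence in the proof of Proposition \ref{prop'}, and your proposed fix via a regular subdivision is a reasonable way to close it.
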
 

First we define  $\nu$-quasi-ordinary polynomials in the toric case.  
Given $P\in \K[[X^v,Z]]_{v\in\s^{\vee}\cap\Z^d}$.  Write 
$$
P(X,Z) = \sum_{(i_1, \ldots, i_{n+1})} P_{i_1, \ldots, i_{n+1}} X^{i_1} \cdots X^{i_n} Z^{i_{n+1}}.
$$
Let $H(P) = \{ (i_1, \ldots, i_{n+1}) \in (\s^{\vee}\cap\Z^d)\times\N; P_{i_1, \ldots, i_{n+1}} \ne 0\}$.  The Newton polyhedron of $P$ 
is the convex hull in $\R^{n+1}$ of $\bigcup_{a\in H(P)} (a+(\s^{\vee}\cap\Z^d)\times\R_{\geq 0})$, and we will denote it by $\NP (P)$.  

A Weierstrass polynomial as before is called  
\emph {$\nu$-quasi-ordinary} if there is a point $R_1$ of the Newton polyhedron $\NP (P)$, 
$R_1\ne R_0=(0,\ldots, 0,d)$, such that if $R_1'$ denotes the projection of $R_1$ onto $\R^n\times 0$ 
from $R_0$, and $S=|R_0,R_1'|$ is the segment joining $R_0$ and $R'_1$, then 
\begin{enumerate}
\item
$\NP (P) \subset |S| = \bigcup_{s\in S} (s+(\s^{\vee}\cap\Z^d)\times\R_{\geq 0})$
\item
$P_S= \sum_{(i_1, \ldots, i_{n+1})\in S} P_{i_1, \ldots, i_{n+1}}  X_1^{i_1} \cdots X_n^{i_n} Z^{i_{n+1}}$ is not a power of a 
linear form in $Z$.  
\end{enumerate} 
The second condition is satisfied automatically if $a_1=0$.   
The proof of the following lemma is the same as the proof of Lemma \ref{lemma3.1}. 

\begin{lemma}
Let $P(Z)\in \K[[X^v]]_{v\in\s^{\vee}\cap\Z^d}[Z]$ be a Weierstrass polynomial \eqref{polynomial} such that $a_1=0$.  
The following conditions are equivalent: 
\begin{enumerate}
\item
$P$ is $\nu$-quasi-ordinary.
\item
$\NP(P)$ has only one compact edge containing $R_0$.  
\item
The ideal $(a_i^{d!/i}(X))_{i=2,\ldots ,d}\subset \K[[X^v]]_{v\in\s^{\vee}\cap\Z^d}$ is monomial and principal.  
\end{enumerate} 
\end{lemma}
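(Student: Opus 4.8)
The plan is to transcribe the proof of Lemma~\ref{lemma3.1} almost verbatim, replacing the exponent monoid $\N^n$ by $M:=\s^{\vee}\cap\Z^d$ and the positive orthant $\R_{\ge 0}^n$ by the dual cone $\s^{\vee}$. Before this transcription is legitimate one must record the single ring-theoretic fact that makes it work: in the completed monoid algebra $\K[[X^v]]_{v\in M}$, monomial divisibility obeys the same rule as in $\K[[X]]$, namely $X^{\gamma}\mid f$ if and only if $\Supp(f)\subset\gamma+M$, for $\gamma\in M$. This is immediate because $M$ embeds in the group $\Z^d$, so multiplication by $X^{\gamma}$ merely translates supports by $\gamma$ and is injective; moreover $M$ is finitely generated by Gordan's lemma, so the associated polyhedral combinatorics is as in the affine case.

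First I would show that (3) is equivalent to the combinatorial condition that there exist $\gamma\in M$ and $i_0\in\{2,\ldots,d\}$ with $a_{i_0}(X)=X^{\gamma}U_{i_0}(X)$, $U_{i_0}(0)\ne 0$, and $X^{j\gamma}\mid a_j^{i_0}$ for every $j\in\{2,\ldots,d\}$. Monomiality of the ideal $(a_i^{d!/i})_i$ forces a monomial generator, hence forces some $a_{i_0}$ to have the form $X^{\gamma}U_{i_0}$ with $U_{i_0}$ a unit; principality, i.e.\ generation by $a_{i_0}^{d!/i_0}$, then unwinds via the divisibility rule above into the relations $X^{j\gamma}\mid a_j^{i_0}$. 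This is the same bookkeeping as in the proof of Lemma~\ref{lemma3.1}, and no new difficulty arises since every monomial involved lies in $M$.

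Next, (1)$\Leftrightarrow$(3) is obtained through the dictionary $R_1=(\gamma,d-i_0)$: the datum $(\gamma,i_0)$ produces the point $R_1\in\NP(P)$, and conversely $R_1$ determines $\gamma$ and $i_0$; the two clauses defining a $\nu$-quasi-ordinary polynomial, the inclusion $\NP(P)\subset|S|$ and the non-degeneracy of $P_S$ (automatic here because $a_1=0$), correspond exactly to the divisibility relations of the previous paragraph. For (1)$\Leftrightarrow$(2) I would let $\pi_0$ be the projection from $R_0=(0,\ldots,0,d)$ onto $\R^n\times 0$ and argue that, since $\NP(P)\subset\s^{\vee}\times\R_{\ge 0}$ has recession cone $\s^{\vee}\times\R_{\ge 0}$, both (1) and (2) are equivalent to $\pi_0(\NP(P))$ being a single translate $p+\s^{\vee}$ of the dual cone for some $p$.

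The step I expect to be the main obstacle is precisely this last equivalence: one must show that $\pi_0(\NP(P))$ is a translate of the \emph{full} dual cone $\s^{\vee}$, with a unique apex $p$, and not merely a larger polyhedron having $\s^{\vee}$ as its recession cone. In the affine case this is automatic from $\pi_0(\NP(P))=p+\R_{\ge 0}^n$, but in the toric case it requires using that the recession cone of $\NP(P)$ in the first $n$ coordinates equals $\s^{\vee}$ and that projecting from the apex $R_0$ carries the configuration of one compact edge through $R_0$ to a single vertex of $\pi_0(\NP(P))$. Once this is in place, all remaining verifications are formal and identical to those in the proof of Lemma~\ref{lemma3.1}.
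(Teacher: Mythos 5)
Your proposal is correct and matches the paper exactly: the paper's entire proof of this lemma is the single sentence that it is the same as the proof of Lemma \ref{lemma3.1}, and your argument is precisely that transcription, with $\N^n$ replaced by $\s^{\vee}\cap\Z^d$ and $\R_{\ge 0}^n$ by $\s^{\vee}$. The extra care you take with the monomial divisibility rule in $\K[[X^v]]_{v\in\s^{\vee}\cap\Z^d}$ and with showing that $\pi_0(\NP(P))$ is a translate of the full cone $\s^{\vee}$ (via the recession cone argument) goes beyond what the paper records and identifies the right places where the toric setting needs checking.
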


In order to prove Theorem  \ref{toricAJ} we first show the toric version of Theorem \ref{maintheorem}.  

\begin{theorem}
Let $\K$ be an algebraically closed field of characteristic zero and let 
$P \in \K[[X^v]]_{v\in\s^{\vee}\cap\Z^d}[Z]$ be a toric quasi-ordinary Weierstrass polynomial such that  $a_1=0$.   
Then  the ideal $(a_i^{d!/i})_{i=2,\ldots ,d}\subset \K[[X^v]]_{v\in\s^{\vee}\cap\Z^d}$ is principal and generated by a monomial.    
\end{theorem}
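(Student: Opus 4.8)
The plan is to mimic the second proof of Theorem \ref{maintheorem} from Section \ref{alternativeproof}, replacing $\N^n$ throughout by the semigroup $\s^\vee\cap\Z^d$ and the complex analytic Abhyankar--Jung Theorem (Proposition \ref{complex}) by its toric form, Theorem \ref{GP}. By the toric analogue of Lemma \ref{lemma3.1} it is enough to show that $P$ is $\nu$-quasi-ordinary, i.e. that $\NP(P)$ has a single compact edge through $R_0=(0,\dots,0,d)$. I would proceed in four steps: (i) prove the toric analogue of Proposition \ref{prop}, that roots lying in $\K[[X^v]]_{v\in\s^\vee\cap\frac1q\Z^d}$ force the ideal $(a_i^{d!/i})$ to be monomial and principal; (ii) conclude from Theorem \ref{GP} that the statement holds over $\C\{X^v\}_{v\in\s^\vee\cap\Z^d}$; (iii) reduce an arbitrary algebraically closed $\K$ of characteristic zero to $\C$ by the Lefschetz principle; and (iv) pass from formal to analytic coefficients over $\C$ using the Artin Approximation Theorem and a Newton polyhedron limiting argument.

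For step (i) the first point is that the differences $Z_i-Z_j$ of the roots are monomials times units, say $Z_i-Z_j=X^{\b_{ij}}u_{ij}$ with $\b_{ij}\in\s^\vee\cap\frac1q\Z^d$ and $u_{ij}(0)\ne0$. In the affine case this is immediate from unique factorization of $X^\a\cdot\text{unit}$, but here the semigroup ring need not be factorial; I would instead deduce it from the quasi-ordinary hypothesis via a smooth toric modification, on whose charts the local ring is a convergent power series ring, so that the product $\D_P=X^\a U$ being a monomial times a unit forces each factor $Z_i-Z_j$ to be one, the point being that a principal toric divisor is the divisor of a monomial. Granting this, the proofs of Lemmas \ref{lemBM} and \ref{lemma1} transcribe verbatim once $\N^n$ is replaced by $\s^\vee\cap\Z^d$: the only structure used is the support inclusion $\Supp(cX^\g)\subset(\a+\s^\vee)\cup(\b+\s^\vee)$ and the fact that divisibility $v\le_\s w:\iff w-v\in\s^\vee$ is a partial order, which holds because the semigroup $\s^\vee\cap\Z^d$ is pointed. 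One then obtains a common exponent $\b$ with $(X^\b)^i\mid a_i$ and an index $i_0$ with $a_{i_0}/X^{i_0\b}$ a unit, so that the ideal is generated by the monomial $X^{i_0\g}$ with $\g=i_0\b\in\s^\vee\cap\Z^d$.

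Steps (ii) and (iii) are then formal. Theorem \ref{GP} supplies roots in $\C\{X^v\}_{v\in\s^\vee\cap\frac1q\Z^d}$, so step (i) gives the result over $\C\{X^v\}_{v\in\s^\vee\cap\Z^d}$; and since the coefficients of the $a_i$ generate a countably generated extension of $\Q$, which embeds into $\C$ as $\trdeg_\Q\C$ is uncountable, and since being quasi-ordinary and $\nu$-quasi-ordinary are insensitive to this embedding, the general case reduces to $P\in\C[[X^v]]_{v\in\s^\vee\cap\Z^d}[Z]$. For step (iv) I would write the discriminant relation as the polynomial equation $Q(A_2,\dots,A_d,U):=\D(A)-X^\a U=0$ over the ring, apply Artin approximation to replace the formal solution by analytic $(a_{i,j},u_j)$ agreeing with it modulo $\m_\s^j$, so that each $P_j$ is analytic and quasi-ordinary with the same $\a$ and satisfies the toric analogue of $\NP(P)\subset\NP(P_j)+\N^n_{\ge j}$; a hypothetical compact face of $\NP(P)$ of dimension $\ge 2$ through $R_0$ would then, for $j$ exceeding the orders of the vertices, persist as a face of $\NP(P_j)$, contradicting that $P_j$ is $\nu$-quasi-ordinary by step (ii).

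The two points needing genuine care are the non-factoriality issue in step (i) and the availability of Artin approximation in step (iv); I expect the former to be the main obstacle, since it is where the toric geometry departs from the affine argument and requires the toric modification and the identification of principal toric divisors with monomials. For step (iv) one uses that the toric analytic ring $\C\{X^v\}_{v\in\s^\vee\cap\Z^d}\cong\C\{Y_1,\dots,Y_m\}/I$ is a quotient of a convergent power series ring by a binomial ideal, hence excellent and Henselian, so the general form of Artin approximation applies; and that the compact faces of the toric Newton polyhedron are stable under perturbations of high $\m_\s$-order because their vertices lie in $(\s^\vee\cap\Z^d)\times\N$ and are unchanged modulo $\m_\s^j$.
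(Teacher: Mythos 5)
Your proposal follows essentially the same route as the paper: the toric analogue of Proposition \ref{prop} via the toric version of Lemma \ref{lemBM}, then Gonz\'alez P\'erez's analytic theorem, the Lefschetz principle, and Artin approximation for $\C\{Y\}/I$. The one point where you go beyond the paper --- justifying that each difference $Z_i-Z_j$ is a monomial times a unit even though the semigroup ring need not be factorial --- is a genuine subtlety that the paper passes over in silence, and your fix via principal divisors supported on the toric boundary is the right one.
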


As before, this theorem may be reformulated as the following:

\begin{theorem}\label{theo'}
If $P$ is a toric quasi-ordinary Weierstrass polynomial with $a_1=0$ then $P$ is  $\nu$-quasi-ordinary.
\end{theorem}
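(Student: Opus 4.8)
The plan is to follow the second proof of Theorem \ref{maintheorem} (Section \ref{alternativeproof}) almost verbatim, substituting the affine inputs by their toric counterparts: in place of Proposition \ref{complex} we use the theorem of Gonz\'alez P\'erez (Theorem \ref{GP}), and in place of $\N^n$ we work with the semigroup $\s^\vee\cap\Z^d$ and the partial order it induces. Concretely, I would first prove a toric analogue of Proposition \ref{prop}, then reduce an arbitrary algebraically closed field $\K$ of characteristic zero to $\C$ by the Lefschetz argument, and finally pass from convergent to formal toric power series by Artin approximation.

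First I would establish the toric version of Proposition \ref{prop}: if $P$ is a toric quasi-ordinary Weierstrass polynomial with $a_1=0$ whose roots lie in $\K\{\{X^v\}\}_{v\in\s^\vee\cap\frac{1}{q}\Z^d}$ for some $q$, then $P$ is $\nu$-quasi-ordinary. The only genuinely new point is the toric analogue of Lemma \ref{lemBM}, for the partial order $\a\preceq\b\iff\b-\a\in\s^\vee$: an identity $a(X)X^\a-b(X)X^\b=c(X)X^\g$ with $a,b,c$ units forces $\a\preceq\b$ or $\b\preceq\a$. This follows from the same support argument, now using that $\Supp(c(X)X^\g)\subset\g+\s^\vee$ with $\g$ in the support (because the supports have recession cone $\s^\vee$ and units have nonzero constant term), so that $\g\in(\a+\s^\vee)\cup(\b+\s^\vee)$ and hence $X^\a\mid X^\g$ or $X^\b\mid X^\g$. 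Granting this, the set $\{\b_{i,j}\}_{j}$ of exponents of the differences $Z_i-Z_j$ is totally ordered for $\preceq$, via the three-term relation $Z_i-Z_j=(Z_i-Z_k)-(Z_j-Z_k)$, and the arguments of Lemma \ref{lemma1} go through unchanged: the common minimum $\b$ satisfies $X^{i\b}\mid a_i$ for all $i$, and $a_{i_0}/X^{i_0\b}$ is a unit, where $i_0$ counts the roots $Z_i=X^\b\tilde Z_i$ with $\tilde Z_i(0)\neq 0$. This exhibits $\g=i_0\b$ as in the toric Lemma, i.e.\ $P$ is $\nu$-quasi-ordinary. Combined with Theorem \ref{GP}, this already proves Theorem \ref{theo'} for any $P\in\C\{X^v\}_{v\in\s^\vee\cap\Z^d}[Z]$.

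Next I would remove the two remaining restrictions exactly as in Section \ref{alternativeproof}. For a general algebraically closed $\K$ of characteristic zero, the coefficients of the $a_i$ generate a countably generated extension $\K_1$ of $\Q$ which embeds into $\C$; since being toric quasi-ordinary and being $\nu$-quasi-ordinary are conditions on the Newton polyhedron insensitive to such an embedding, we may assume $\K=\C$. To pass from the analytic to the formal case, I would set $Q(A_2,\ldots,A_d,U)=\D(A_2,\ldots,A_d)-X^\a U\in\C\{X^v\}_{v\in\s^\vee\cap\Z^d}[A_2,\ldots,A_d,U]$, observe that the formal data $(a_2,\ldots,a_d,u)$ is a solution with $u$ a unit, and apply Artin approximation for the excellent Henselian local ring $\C\{X^v\}_{v\in\s^\vee\cap\Z^d}\cong\C\{Y\}/I$ to produce analytic solutions $(a_{i,j},u_j)$ agreeing with $(a_i,u)$ modulo $\m_{\s}^j$, with $u_j$ still a unit. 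The resulting $P_j\in\C\{X^v\}_{v\in\s^\vee\cap\Z^d}[Z]$ is analytic, toric quasi-ordinary, Weierstrass with $a_1=0$, hence $\nu$-quasi-ordinary by the analytic case. Finally, $\NP(P)$ and $\NP(P_j)$ differ only by exponents in $\m_{\s}^j$, so if $\NP(P)$ had a compact face of dimension $\geq 2$ through $R_0$, that face would persist in $\NP(P_j)$ for $j$ larger than the sizes of all vertices of $\NP(P)$, contradicting that $P_j$ is $\nu$-quasi-ordinary.

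The main obstacle I anticipate is the Artin approximation step in the toric setting: one must check that Artin approximation is available for the ring $\C\{X^v\}_{v\in\s^\vee\cap\Z^d}$, which is not regular but a quotient $\C\{Y\}/I$ by a binomial ideal (it is excellent and Henselian, so this holds), and that the approximation error is controlled by $\m_{\s}$ so that the Newton-polyhedron proximity holds \emph{with the correct toric recession cone} $\s^\vee\times\R_{\geq 0}$. This amounts to transporting the ``face persistence'' argument of Section \ref{alternativeproof} from $\N^n$ to the semigroup $\s^\vee\cap\Z^d$ and verifying that a compact two-dimensional face of $\NP(P)$ at $R_0$ genuinely survives under an $\m_{\s}$-adically small perturbation; the remaining steps are routine adaptations of the affine arguments.
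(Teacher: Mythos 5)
Your proposal follows the same route as the paper: establish the toric analogue of Proposition \ref{prop} via the toric version of Lemma \ref{lemBM} (replacing $\N^n$ by $\s^\vee$), combine it with Theorem \ref{GP} to settle the convergent case over $\C$, reduce a general algebraically closed $\K$ to $\C$ by the Lefschetz embedding, and pass to formal coefficients by Artin approximation for the Henselian ring $\C\{X^v\}_{v\in\s^{\vee}\cap\Z^d}\simeq\C\{Y\}/I$ together with the face-persistence argument on Newton polyhedra. This matches the paper's proof, which is stated exactly as this list of substitutions into the second proof of Theorem \ref{maintheorem}.
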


\begin{prop}\label{prop'}
Let $\K$ be an algebraically closed field of characteristic zero. Let $P(Z)\in \K[[X^v]]_{v\in\s^{\vee}\cap\Z^d}[Z]$ be a quasi-ordinary Weierstrass polynomial with $a_1=0$. If there is $q\in\N\setminus \{0\}$  such that $P(Z)$ has its roots in $\K[[X^v]]_{v\in\s^{\vee}\cap\frac{1}{q}\Z^d}$ then $P$ is $\nu$-quasi-ordinary.
\end{prop}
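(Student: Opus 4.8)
The plan is to follow the proof of Proposition \ref{prop} almost verbatim, with the coordinatewise order on $\N^n$ replaced by the order attached to the cone: for $u,u'\in\frac1q\Z^d$ set $u\le u'$ iff $u'-u\in\s^\vee$, so that in $R':=\K[[X^v]]_{v\in\s^\vee\cap\frac1q\Z^d}$ the monomial $X^u$ divides $X^{u'}$ exactly when $u\le u'$. After substituting powers we may take $q=1$, so the roots $Z_1,\ldots,Z_d$ lie in $R'$. As in Proposition \ref{prop}, the aim is to produce a common exponent $\beta$ and an index $i_0$ with $a_{i_0}=X^{i_0\beta}U_{i_0}$, $U_{i_0}(0)\ne0$, and $X^{j\beta}\mid a_j$ for all $j$; by the toric analogue of Lemma \ref{lemma3.1} (same proof) this is exactly condition (3), and with $\gamma=i_0\beta$ it gives that $P$ is $\nu$-quasi-ordinary.

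The \textbf{main obstacle} is the very first step of Proposition \ref{prop}, namely that each difference $Z_i-Z_j=X^{\beta_{ij}}u_{ij}$ is a monomial times a unit. Over $\K\{X^{1/q}\}$ this rested on unique factorisation, but $R'$ is in general not factorial (the semigroup $\s^\vee\cap\frac1q\Z^d$ need not be free), so a factor of a monomial need not be a monomial. Instead I would exploit strict convexity of $\s$. Being full-dimensional and strictly convex, $\s^\vee$ is pointed and every linear form $\ell$ in the interior of $\s$ satisfies $\langle\ell,w\rangle>0$ for all $w\in\s^\vee\setminus\{0\}$; pointedness makes each slice $\{w\in\s^\vee\cap\frac1q\Z^d:\langle\ell,w\rangle\le c\}$ finite, so every $f\in R'$ has a well-defined $\ell$-leading exponent $v_\ell(f)$, with unique minimiser for generic $\ell$, and $v_\ell(fg)=v_\ell(f)+v_\ell(g)$ in the domain $R'$. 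Since the discriminant is, up to sign, $\prod_{i\ne j}(Z_i-Z_j)=X^\alpha U$ with $\alpha\in\s^\vee$ and $U$ a unit, we get $\sum_{i\ne j}v_\ell(Z_i-Z_j)=\alpha$ for every generic interior $\ell$. Comparing two such forms $\ell_1,\ell_2$, summing the minimality inequalities $\langle\ell_1,v_{\ell_1}(Z_i-Z_j)\rangle\le\langle\ell_1,v_{\ell_2}(Z_i-Z_j)\rangle$ and using $\sum v_{\ell_1}=\sum v_{\ell_2}=\alpha$ forces termwise equality; uniqueness of the $\ell_1$-minimiser then gives $v_{\ell_2}(Z_i-Z_j)=v_{\ell_1}(Z_i-Z_j)$. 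Thus a single $\beta_{ij}$ minimises $\langle\ell,\cdot\rangle$ on $\Supp(Z_i-Z_j)$ for all generic interior $\ell$, so $\langle\ell,v-\beta_{ij}\rangle\ge0$ there; as such $\ell$ are dense in $\s$, this yields $v-\beta_{ij}\in\s^\vee$ for all $v\in\Supp(Z_i-Z_j)$, that is $Z_i-Z_j=X^{\beta_{ij}}u_{ij}$ with $u_{ij}(0)\ne0$. This recovers the normal crossing property directly from the quasi-ordinary hypothesis, without factoriality.

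With the exponents $\beta_{ij}\in\s^\vee\cap\frac1q\Z^d$ in hand the rest transcribes Proposition \ref{prop} for the cone order. Lemma \ref{lemBM} holds by the same support computation: if $aX^\alpha-bX^\beta=cX^\gamma$ with $a,b,c$ units then $\gamma\in(\alpha+\s^\vee)\cup(\beta+\s^\vee)$, whence $\alpha\le\beta$ or $\beta\le\alpha$ (using that $\s^\vee$ is a semigroup and $X^{\beta-\alpha}\in R'$ iff $\beta-\alpha\in\s^\vee$); applied to $Z_{ij}-Z_{ik}=Z_{kj}$ it shows $\{\beta_{ij}:j\ne i\}$ is a chain, so $\beta_i:=\min_j\beta_{ij}$ exists. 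Lemma \ref{lemma1} then follows as written: from $Z_{ij}=Z_{ik}+Z_{kj}$ the support argument gives $\beta_{ij}\ge\beta_{ik}$ or $\beta_{ij}\ge\beta_{kj}$, hence $\beta_{ij}\ge\beta_k$, hence $\beta_i\ge\beta_k$, and by symmetry $\beta_k\ge\beta_i$, so pointedness ($\s^\vee\cap(-\s^\vee)=\{0\}$) forces $\beta_1=\cdots=\beta_d=:\beta$; since $a_1=0$, $Z_i=\frac1d\sum_k(Z_i-Z_k)$ is divisible by $X^\beta$ and so $a_i=\pm e_i(Z_1,\ldots,Z_d)$ is divisible by $X^{i\beta}$. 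Finally, writing $Z_i=X^\beta\tilde Z_i$, the identities $\tilde Z_i-\tilde Z_j=X^{\beta_{ij}-\beta}u_{ij}$, with $\beta_{ij}=\beta$ attained for some $j$, show the $\tilde Z_i(0)$ are not all equal; since $\sum_i\tilde Z_i=0$ at least two are distinct, so $i_0:=\#\{i:\tilde Z_i(0)\ne0\}$ lies in $\{2,\ldots,d\}$, giving $a_{i_0}=X^{i_0\beta}$ times a unit and $X^{j\gamma}=X^{ji_0\beta}\mid a_j^{i_0}$ for all $j$ with $\gamma=i_0\beta$. This is condition (3), so $P$ is $\nu$-quasi-ordinary; the only genuinely new point beyond Proposition \ref{prop} is the normal crossing step above.
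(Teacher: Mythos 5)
Your proposal is correct, and its overall architecture is exactly the paper's: the paper proves Proposition \ref{prop'} by declaring the argument ``exactly the same'' as that of Proposition \ref{prop}, with $\N^n$ replaced by $\s^{\vee}$ in Lemma \ref{lemBM} (this is the paper's Lemma \ref{lemmaBM}), and your transcription of Lemmas \ref{lemBM} and \ref{lemma1} and of the final count $i_0=\#\{i:\tilde Z_i(0)\ne 0\}$ matches that word for word. Where you genuinely go beyond the paper is the normal-crossings step: the paper's Proposition \ref{prop} gets $Z_i-Z_j=X^{\beta_{ij}}u_{ij}$ for free from the factorisation $\prod_{i\ne j}(Z_i-Z_j)=X^{\alpha}U$ because $\K[[X^{1/q}]]$ is factorial with the variables as primes, and this argument does \emph{not} transfer to $\K[[X^v]]_{v\in\s^{\vee}\cap\frac{1}{q}\Z^d}$, which is in general not a UFD (already for the semigroup generated by $(2,0),(1,1),(0,2)$ one has $X^{(2,0)}X^{(0,2)}=(X^{(1,1)})^2$). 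Your replacement --- leading exponents $v_\ell$ for generic $\ell$ in the interior of $\s$, additivity of $v_\ell$ on the domain $R'$, the termwise-equality trick comparing two generic forms to show the minimiser of $\langle\ell,\cdot\rangle$ on $\Supp(Z_i-Z_j)$ is independent of $\ell$, and then density to conclude $\Supp(Z_i-Z_j)\subset\beta_{ij}+\s^{\vee}$ --- is correct and fills a point the paper silently assumes when it asserts the proofs are identical. So the two proofs buy the same theorem, but yours makes explicit (and repairs) the one place where ``replace $\N^n$ by $\s^{\vee}$'' is not literally enough.
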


\begin{proof}[Proof of Proposition \ref{prop'}]
The proof of Proposition \ref{prop'} is exactly the same as the proof of Proposition \ref{prop}. We only need the following lemma:

\begin{lemma} \label{lemmaBM}
Let  $\alpha,\beta,\gamma \in \s^{\vee}\cap\Z^d$ and let $a(X), b(X), c(X)$ be invertible 
elements of $\K[[X^v]]_{v\in\s^{\vee}\cap\Z^d}$.  If  
$$
a(X) X^\alpha -b(X)X^\beta = c(X)X^\gamma,
$$
then either $\alpha\in\beta+\s^{\vee}$ or $\beta\in\alpha+\s^{\vee}$.   
\end{lemma}

The proof of Lemma \ref{lemmaBM} is exactly the same as the proof of Lemma \ref{lemBM}, we only need to replace $\N^n$ by $\s^{\vee}$.
\end{proof}

\begin{proof}[Proof of Theorem \ref{theo'}]
The proof of Theorem \ref{theo'} is  similar to the second proof of Theorem \ref{maintheorem},  section \ref{alternativeproof}.  
We replace Proposition \ref{prop} by Proposition \ref{prop'},  $\K[[X]]$ by $\K[[X^v]]_{v\in\s^{\vee}\cap\Z^d}$ 
and $\C\{X\}$ by $\C\{X^v\}_{v\in\s^{\vee}\cap\Z^d}\simeq\C\{Y\}/I$.  Then we use the fact that the ring $\C\{Y\}/I$ satisfies the Artin Approximation Theorem (cf. \cite{Ar} Theorem 1.3).
\end{proof}

\begin{proof}[Proof of Theorem \ref{toricAJ}]
We prove Theorem \ref{toricAJ} exactly as we proved Theorem \ref{AJ}.  Step 1, Step 2 and Step 3 are exactly the same (we just replace $P(Z)|_{X=0}$ by the image of $P(Z)$ in $\K\{\{X^v\}\}_{v\in\s^{\vee}\cap\Z^d}[Z]/\m_{\s}$). For Step 4, we replace the Implicit Function Theorem by Hensel's Lemma (cf. \cite{EGA} 18.5.13) since $\K\{\{X^v\}\}_{v\in\s^{\vee}\cap\Z^d}\simeq\K\{\{Y\}\}/I$ is a local Henselian ring.
\end{proof}


\end{document}